 \newtheorem{thm}{Theorem}[section]
 \newtheorem{defn}[thm]{Definition}
 \newtheorem{rem}[thm]{Remark}
 \numberwithin{equation}{section}
\def\R{{\Bbb R}}
\def\la{{\lambda}}
\def\pl{{\partial}}
\def\R{{\mathbb R}}
\def\no{{\nonumber}}
\def\Om{{\Omega}}
\def\bge{\begin{eqnarray}}
\def\bgee{\begin{eqnarray*}}
\def\ege{\end{eqnarray}}
\def\egee{\end{eqnarray*}}
\newcommand{\ee}{{\mathrm{e}}}
\newcommand{\Pbb}{\mathbb{P}}
\def\be{\begin{equation}}
\def\ee{\end{equation}}
\def\bse{\begin{subequations}}
\def\ese{\end{subequations}}
\def\bge{\begin{eqnarray}}
\def\bgee{\begin{eqnarray*}}
\def\ege{\end{eqnarray}}
\def\egee{\end{eqnarray*}}
\date{\today}
\date{\today}
\begin{document}
\title[A stochastic MEMS problem]
{Impacts of noise on quenching of some models arising in MEMS technology}

\author{Ourania Drosinou}
\address{Department of Mathematics, University of Aegean,
Gr-83200 Karlovassi, Samos, Greece
}
\email{rdrosinou@aegean.gr}

\author{Nikos I. Kavallaris}
\address{Department of Mathematical and Physical Sciences, University of Chester, Thornton Science Park,
Pool Lane, Ince, Chester  CH2 4NU, UK}
\email{n.kavallaris@chester.ac.uk}

\author{Christos V. Nikolopoulos}
\address{Department of Mathematics, University of Aegean,
Gr-83200 Karlovassi, Samos, Greece
}

\email{cnikolo@aegean.gr}

\subjclass{Primary 60H15, 35R60;    Secondary 93A30,  65M08.  }

\keywords{Electrostatic MEMS, touchdown, quenching, exponential functionals of Brownian motion,  semilinear  SPDEs.}

\date{\today}


\pagenumbering{arabic}


\begin{abstract}
 In the current work we study a stochastic  parabolic problem.  The underlying  problem is actually motivated by the study of an idealized electrically actuated MEMS  (Micro-Electro-Mechanical System)  device in the case of random fluctuations of the potential difference controlling the device.
 We first  present 
  the mathematical model and then we deduce  some local existence results. Next for some particular versions of the model, regarding its boundary conditions, we derive   quenching  results  as well as estimations of the probability for such singularity to occur. Additional numerical study of the problem in one dimension follows, investigating the problem further with respect to its quenching behaviour.
\end{abstract}

\maketitle
\vspace{0.5in}

\section{Introduction}

In the present work we investigate  the following stochastic semilinear parabolic problem

\bse\label{LSP}
 \be\label{LSP1}
\frac{\partial u}{\partial t} =\Delta u+ \frac{\lambda}{\left(1-u\right)^2 }+\kappa (1-u)\partial_t W(x,t),
 \quad \mbox{in} \quad Q_T:=D \times (0,T),\;T>0,
 \ee
 \be \label{LSP2}
 \mathcal{B}u=\beta_c, \quad \mbox{on} \quad \Gamma_T:=\pl D \times (0,T),
  \ee
  \be\label{LSP3}
 0\leq u(x,0)=u_0(x)<1, \quad x \in D,
   \ee \ese
 as well as some of  its variations rise a  mathematical interest. Here $\lambda$ and $\kappa$,  are given positive constants and  $D$ is  a bounded subset of $\mathbb{R}^d$, $d=1,2,3$ with smooth boundary.
 In addition $\beta_c$ might be a positive  or zero constant whilst the boundary  operator $\mathcal{B}$ gives rise to Robin boundary conditions, i.e. $\mathcal{B}u:=\frac{\partial u}{ \partial \nu} + \beta u  $, for some positive constant $\beta$. 

Remarkably, by setting $\beta\to \infty$ and $\beta_c=0$ (no string effect at the devises support,  cf. \cite{DKN19},  and no external force at it) we obtain Dirichlet boundary conditions. 
 On the other hand, for $0<\beta< \infty$ and  $\beta_c=0$  Robin boundary conditions arise  (models a string effect
in the boundary cf. \cite{DKN19}). Cases for $\beta_c> 0$ can also be considered,  modelling, additional to the string effect, external forces like in pressure sensors,  cf. \cite{Younis}. The latter consideration, regarding  nonhomogeneous boundary conditions,  has  significant theoretical interest as well.
Besides,  the term $\partial_t W(x,t)$ denotes by convention the formal time derivative of the one dimensional real valued Wiener random process $W(x,t)$ in a complete probability space $\{ \Omega,\,{\mathcal F}_t,\,\mathbb{P} \}$ with filtration $\left({\mathcal F}_t\right)_{t\in[0,T]};$ $W(x,t)$ is  defined rigorously in section \ref{pre}. Thus  $\kappa (1-u)\partial_t W(x,t)$ represents a multiplicative noise reflecting the fact of the occurrence of of possible fluctuations into the physical parameters of the MEMS device, cf. section \ref{cmm}.


 Notably towards the limit $\kappa \to 0+$ problem \eqref{LSP} is reduced to its deterministic version, 
\bse\label{DLPu}
 \be\label{DLPu1}
\frac{\partial u}{\partial t}=\Delta u+ \frac{\lambda}{\left(1-u\right)^2 },
 \quad \mbox{in} \quad Q_T,\;T>0,
 \ee
 \be \label{DLPu2}
\mathcal{B}u=0, \quad \mbox{on} \quad \Gamma_T,
  \ee
  \be\label{DLPu3}
 0\leq u(x,0)=u_0(x)<1, \quad x \in D,
   \ee \ese
which, 
 for homogeneous boundary conditions, has been extensively studied in \cite{EGG10, FMPS07, GG08,  KMS08, KS18}. For hyperbolic modifications of  the deterministic variation of  \eqref{LSP} an  interested reader can check \cite{F14, G10,  KLNT15}.  Finally,  non-local alterations of  parabolic and hyperbolic problems arising in MEMS technology are treated in \cite{DKN19, DZ19, GHW09, gkwy20, GK12, KLNT11, KLN16, KS18, M1, M2, M3}.

Due to the presence of the term $f (u):=\frac{1}{(1-u)^2}$ in \eqref{DLPu} we have the occurrence of an singular behaviour, called  {\it (finite-time) quenching}, when $\max_{x\in \bar{D}}u\to 1,$ which is closely associated with the mechanical phenomenon of {\it touching down}. Relating to the stochastic problem \eqref{LSP},  it is worth investigating whether  such a problem can perform analogous singular (quenching) behaviour  to the deterministic problem \eqref{DLPu}.  The main purpose of the current paper is twofold; first to examine the circumstances under which {\it quenching} occurs for the stochastic problem \eqref{LSP}, which is actually a stochastic perturbation of  \eqref{DLPu} derived by a random perturbation of the parameter $\lambda,$ cf. section \ref{cmm}. Secondly,  we intend  to obtain, using both analytical and  numerical approach,  estimates of the {\it probability of quenching} as well as of the {\it quenching time}, which in that case is a random variable.  To the best of our knowledge, this  is the very first time in the literature that this second approach  is considered in the context of MEMS problems. Apart from its practical importance for MEMS engineers such a consideration has its own theoretical value in the context of singular stochastic PDEs (SPDEs).
 
The structure of the current work is as follows. In the next section a derivation of the stochastic model  \eqref{LSP} is presented. In section \ref{pre} we provide the main mathematical tools from stochastic calculus used through the manuscript as well as give the concepts of solutions for the stochastic problem \eqref{LSP} and its considered variations. Section \ref{le} deals with the local existence of all the underlying versions of \eqref{LSP} via Banach's fixed point theorem. Next,  in section \ref{eqp} we appeal to the key properties of exponential functionals of Brownian motion to derive estimates of quenching time as well as estimates of the quenching probability for stochastic problem \eqref{LSP} and some of its variations. As far as we know this  is the first time in the literature of SPDEs where such an approach is used for MEMS nonlinearities. A numerical approach delivered in section  \ref{na} verifies through various numerical experiments the analytical results of the previous sections for nonhomogeneous conditions. Besides, the numerical approach also provides quenching results for the case of homogeneous boundary conditions which is not treated via the analysis of section \ref{eqp}. The current work closes with discussion of the importance of the obtained results in section \ref{di}. 


\section{ The mathematical model}\label{cmm} 
Our main motivation for investigating  problem  \eqref{LSP}  is  its close connection with the operation of
some electrostatic actuated MEMS. By the  term ``MEMS"  we more precisely refer to precision devices
which combine both mechanical processes
with electrical circuits. MEMS devices range in size from
millimetres down to microns, and involve precision mechanical
components which  can be constructed using semiconductor
manufacturing technologies. Indeed, the  last decade various electrostatic actuated MEMS
have been developed and used in a wide variety of devices applied as sensors
and have fluid-mechanical, optical, radio frequency (RF),
data-storage, and biotechnology applications.
Interesting examples of microdevices of this kind include microphones,
temperature sensors, RF switches, resonators, accelerometers,
micromirrors, micropumps, microvalves, data-storage devices etc.,
\cite{KS18, JAP-DHB02,Younis}.

The key part of such a electrostatic actuated
MEMS device usually consists of  an
elastic plate (or membrane) suspended above a rigid ground one.
Regularly the elastic plate is held
fixed at two ends while the other two edges remain free to move,  see Figure~ \ref{Figmems1}.
 \begin{figure}[h!]\vspace{-2cm}
  \centering
\includegraphics[width=.7\textwidth]{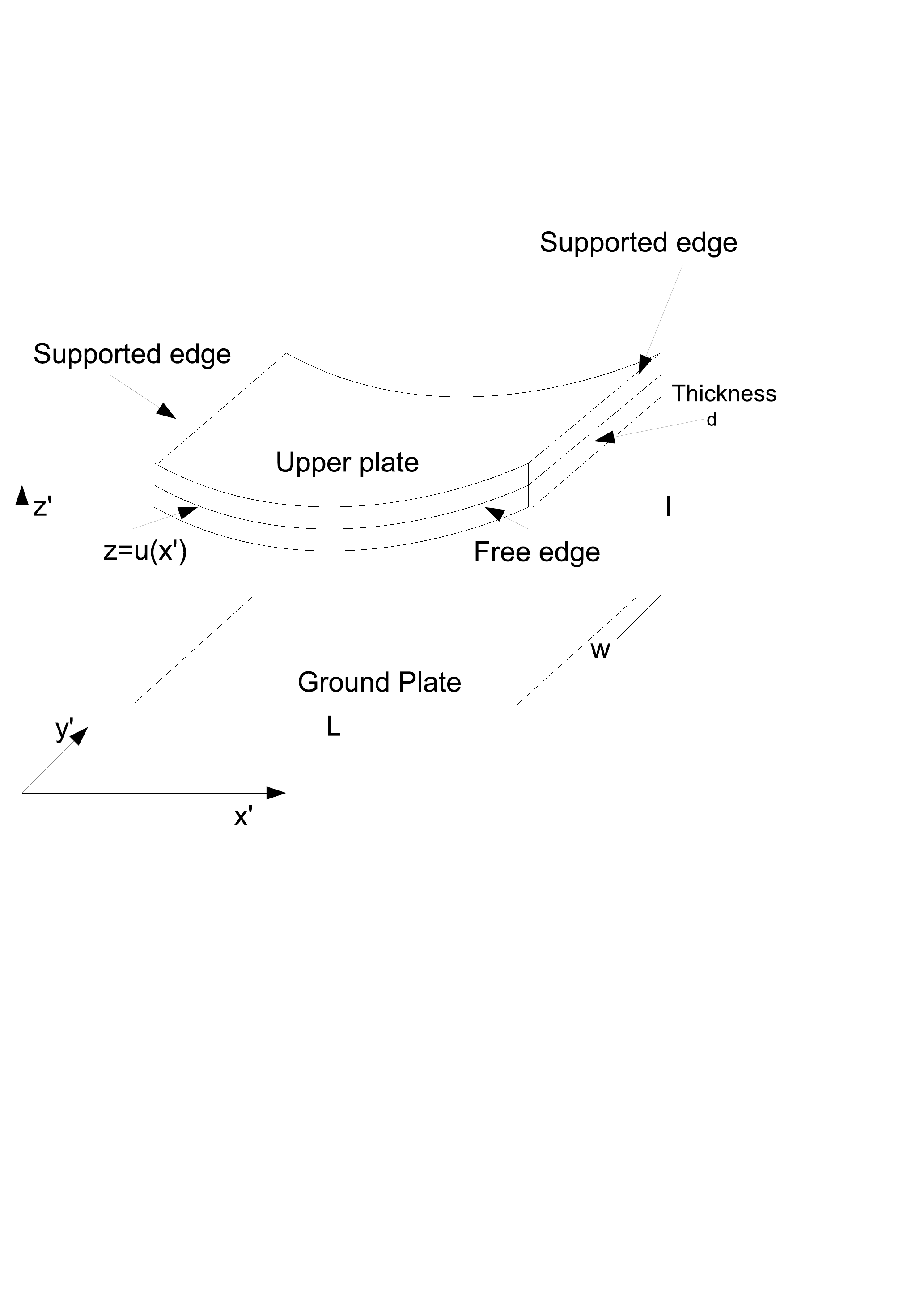}\vspace{-6cm}
  \caption{\it Schematic representation of a MEMS device }
\label{Figmems1}
\end{figure}

When a potential difference
$V$  is applied  between the elastic membrane and the rigid ground plate,
then a deflection of the  membrane towards the plate is observed. Assuming now that  the width $d$ of the gap, between the membrane
and the bottom plate, is small compared to the
device length $L$,  then
the deformation of the elastic membrane $u,$ after proper scaling,  can
described by the dimensionless equation
\bge\label{ik3}
\frac{\partial u}{\partial t}=\Delta u +\frac{\widetilde{\lambda}\, h(x,t)}{(1-u)^2},
\quad x\in D,\; t>0, \label{ik2}
\ege
see \cite{KS18, JAP-DHB02, PT01}. Here the term  $h(x,t)$ describes the varying dielectric properties of the membrane and for some elastic materials can be taken to be constant;
for simplicity henceforth  we  assume  that $h(x,t)\equiv 1,$ although the general case is again considered in section \ref{eqp}.
Besides, the parameter $\lambda$ appears in \eqref{ik3} equals to
$$
\widetilde{\la}=\frac{V^2 L^2 \varepsilon_0}{2\mathcal{T}\ell^3},
$$
and  is actually the {\it tuning} parameter of  the  considered MEMS device.
Note that $\mathcal{T}$ stands for the tension of the elastic membrane,
 $\ell$ is the characteristic width of the gap
between the membrane and the fixed ground plate (electrode),
whilst  $\varepsilon_0$ is the permittivity of free space. MEMS engineers are interested in identifying under which conditions the elastic membrane could touch the rigid plate, a phenomenon is usually called {\it touching down} and could lead to the destruction of MEMS device. Touching down can be described via model \eqref{ik3} and occurs when the deformation $u$  reaches the value $1;$ such a situation  in the mathematical literature is known as {\it quenching (or extinction)}.

Experimental observations, see \cite{Younis},  show a significant uncertainty regarding the values of $V$ and $\mathcal{T}.$  More specifically, $V$ fluctuates around an average value $V_0 $
 (corresponding to some $\la>0$)  inferring that we end up with the parameter $\widetilde{\la}=\la+\sigma\, \eta(x,t)$ where $\sigma>0$ is a coefficient  measuring the  intensity of the fluctuation (noise term)  $\eta(x,t).$ Naturally, the coefficient $\sigma$  depends on the deformation $u$ (that is  $\sigma\equiv \sigma(u)$),  whereas a feasible choice for the  noise $\eta(x,t)$  could be a space-time {\it white noise},  i.e. $\eta(x,t)=\partial_t W(x,t),$ and thus we  consider  $\widetilde{\la}=\la+\sigma(u)\partial_t W(x,t).$  From the applications point of view it would be compelling to investigate the impact of uncertainty on the phenomenon of touching down. Accordingly,  it would be  feasible to choose the diffusion coefficient $\sigma(u)$ as a power of the difference $1-u,$ i.e. $\sigma(u)=(1-u)^{\vartheta},$   measuring the distance to quenching (touching down).
  Now choosing $\theta=3$ we derive
\bgee
\frac{\widetilde{\lambda}}{(1-u)^2}=\frac{\lambda}{(1-u)^2}+\kappa (1-u) \partial_t W(x,t),
\egee
and thus the above analysis reveals that under some imposed uncertainty model \eqref{ik3} can be  transformed to \eqref{LSP1}. Furthermore, it should be noted that the choice $\theta=3$  leads to a linear type diffusion term for which case the local existence theory is well established, cf \cite{dazab, Kavallaris2016, KY20}. For the case of a model with a general  diffusion term $\sigma(u)$ the interested reader can check  \cite{Kavallaris2016}.  Next, in case  the two edges of the membrane are attached to a pair of torsional and translational springs, modeling a flexible non ideal support \cite{DKN19, Younis}, see also  Figure~\ref{Figmems2}, 
 then homogeneous boundary conditions of the form \eqref{LSP2}, with $\beta_c=0$,
 are imposed  together with the stochastic equation for the deformation $u$  and complemented  with  initial condition \eqref{LSP3}.

 The case of having  $\beta_c>0$ may arise as well with a configuration where  the support or cantilever of MEMS devises might be nonideal and flexible. More specifically,   considering the  situation in which together with the spring force at the edges of the membrane we also have a significant 
external force oposite to the spring force,  e.g. due to gravity,  cf. \cite{Younis}. The latter consideration would result in a boundary condition of the form $\frac{\partial u}{\partial n}= -\beta u + \beta_c$ where $\beta_c$ stands for this external force. For simplicity and without loss of generality, especially regarding the analysis in section \ref{eqp}, 
we may take $\beta_c$ to be of the same magnitude as $\beta.$ Then we end up with a nonhomogeneous  boundary condition of  the form 
$\frac{\partial u}{\partial n}= \beta(1- u )$ for some $ \beta>0$.

\begin{figure}[!htb]\vspace{-.4cm}
   \centering
     \includegraphics[width=.6\linewidth]{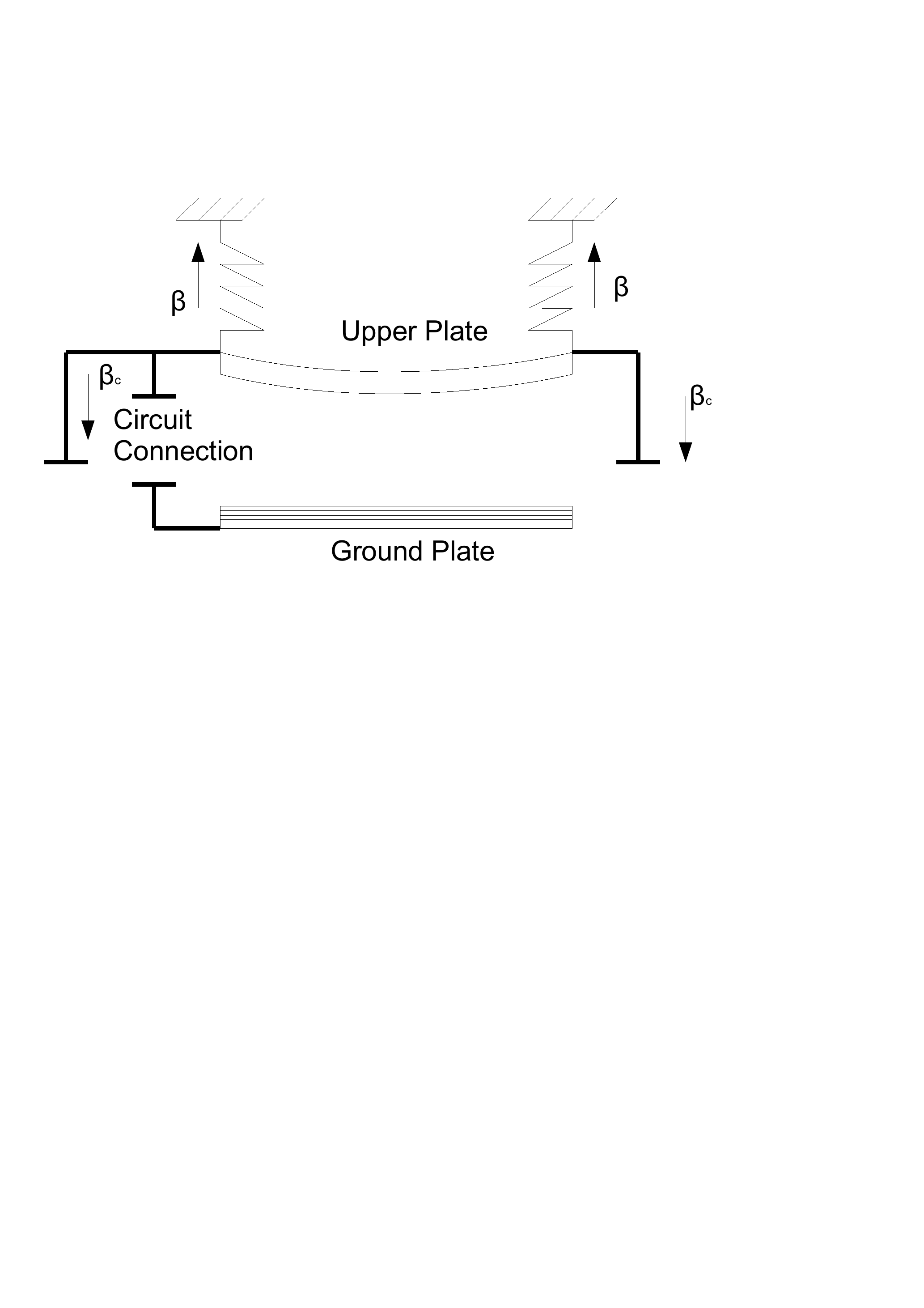}\vspace{-6cm}

     \vspace{-2cm}
\caption{Schematic representation of a MEMS device with support nonideal  and subject to external forces.}\label{Figmems2}
\end{figure}
Notably, the mathematical model \eqref{LSP1}, as a stochastic perturbation of \eqref{ik3}, is build up to capture  possible destructions due to the uncertainty in  parameter measurements of the MEMS system. Thus, under these circumstances is more realistic compared to  \eqref{ik3}.

\section{Preliminaries}\label{pre}
The current section is devoted to  the introduction of the main mathematical concepts and tools  from the area of stochastic calculus that will  be used  throughout the manuscript.  Henceforth, $C,K$ will denote positive constants whose values might change from line to line.

 We  first consider the complete probability space $\{ \Omega,\,{\mathcal F}_t,\,\mathbb{P} \}$ with filtration $\left({\mathcal F}_t\right)_{t\in[0,T]}.$  Next take  $H:=L^2(D)$ and  let also $Q \in  \mathcal{L}_1(H)$ be a linear non-negative definite and symmetric operator which has an orthonormal basis $\chi_{j}(x) \in H, j=1,2, 3, \dots$ of eigenfunctions with corresponding eigenvalues $ \gamma_{j} \geq 0, j=1, 2, 3, \dots$ such that $\text{Tr} (Q) = \sum_{j=1}^{\infty} \gamma_{j} <\infty;$ that is  $Q$ is of trace class.
   Then  $W(\cdot,t)$  is a $Q$-Wiener process if and only if
\begin{equation} \label{noise1}
W(x,t) = \sum_{j=1}^{\infty} \gamma_{j}^{1/2}  \chi_{j}(x) \beta_{j}(t), \quad\mbox{almost surely (a.s.)}\;,
\end{equation}
where $\beta_{j}(t)$ are independent and identically distributed (i.i.d)  $\mathcal{F}_{t}$-Brownian motions and the series converges in $L^{2}(\Om, H),$  cf. \cite{cho}.  It is worth noting that the eigenfunctions $\{ \chi_{j}(x) \}_{j=1}^{\infty}$ may be different from the eigenfunctions $\{ \phi_{j}(x) \}_{j=1}^{\infty}$ of the elliptic operator  $ A=-\Delta:\mathcal{D} ( A)=W^{2,2}(D)\cap W^{1,2}(D)\subset H\to H, $  which is self-adjoint, positive definite with compact inverse.  Note that the trace class operator $Q$ is also a Hilbert-Schmidt operator and then we denote $Q\in \mathcal{L}_2(H).$

For  such an operator  $Q\in \mathcal{L}_2(H)$ with $\text{Tr} (Q) <\infty$,  there exists a kernel $q(x, y)$ such that
\[
(Qu) (x) := \int_{D} q(x, y) u(y) \, dy, \quad\mbox{for any} \; x \in D, \; u \in H,
\]
see \cite[p. 42-43]{cho} and \cite[Definition 1.64]{Lord}.
The kernel $q(x, y)$ is also called the covariance function of the $Q$-Wiener process $W(x,t)$.

Let $X$ be a Banach space with the norm $\| \cdot \|_{X}$ we then define the following Hilbert space
$$
 L_{2}^{0}(H; X) =\left\{\psi \in  L(H,X): \;     \sum_{j=1}^{\infty}   \| \psi Q^{1/2} (\phi_{j})  \|_{X}^{2} = \sum_{j=1}^{\infty} \gamma_{j} \| \psi (\phi_{j})  \|_{X}^{2} <\infty \right\},
$$
with norm $\| \psi \|_{L_{2}^{0}} =  \Big ( \sum_{j=1}^{\infty} \gamma_{j} \| \pi (\phi_{j})  \|_{X}^{2} \Big )^{1/2}$,
 where $L(H, X)$ denotes  the space of all bounded operators from $H$ to $X$.  For $\Psi: [0, T] \to L_{2}^{0} (H, X)$,  the stochastic integral $\int_{0}^{T} \Psi (t) \, d W(t)$ is well defined, \cite{dazab}.
Furthermore we denote by $L^2(\Om,H)$ the space of all random variables $X:\Om\to H$ equipped with the norm $$\|X(\omega)\|_{L^2(\Om,H)}:=\mathbb{E}\left[\|X(\omega)\|_{H}^2\right]^{1/2}
<\infty,\quad\mbox{for any}\quad\omega\in \Om,$$ known also as the space of the mean-square integrable random
variables, where $\mathbb{E}[\cdot]$ stands for the expectation in the probability space $(\Om,\mathcal{F}_t, \mathbb{P}).$

 Analogously  problem  \eqref{LSP} is written in the form of an It\^o problem as follows
\bse\label{LSP1u}
 \be\label{LSPu1}
du_t=\left(\Delta u_t+f(u_t)\right)dt+\sigma( u_t)  dW_t, \quad \mbox{in} \quad Q_T,
 \ee
 \be \label{LSPu2}
0\leq u_0\leq 1, \quad\mbox{almost surely (a.s.)},
   \ee \ese
where  $f(u_t):=(1-u_t)^{-2}$ and $\sigma(u_t):=\kappa(1- u_t).$

  It can be easily checked that  $f:H\to H,$ satisfies a local Lipschitz condition, i.e. for any $0\leq \rho< 1$ and $w_1, w_2\in B_{\rho}:=\{w\in L^{\infty}(\Omega):0\leq\|w\|_{\infty}<\rho\}$  there exists $C_{\rho}>0$ such that
\begin{equation} \label{fnew1}
\|f(w_{1}) - f(w_{2}) \|_{H} \leq C_{\rho}\| w_{1} - w_{2} \|_{H}.
\end{equation}
Notably, an  immediate consequence of \eqref{fnew1} is the following growth condition
\bge\label{gc1}
\|f(w))\|_{H} \leq C_{\rho}\left(1+\| w\|_{H}\right)\quad\mbox{for any}\quad w\in B_{\rho}.
\ege
 Besides,
$\sigma: H\to \mathcal{L}_0^2$  satisfies a local Lipschitz condition and a linear growth condition as well (\cite[Lemma 10.24]{Lord}), in particular for any $0<\rho_1< \rho_2\leq 1$ there exists $K_{\rho_1, \rho_2}>0$ such that for any $w_1,w_2, w\in B_{\rho},$
\begin{equation} \label{fnew1a}
\|\sigma(w_{1}) - \sigma(w_{2}) \|_{\mathcal{L}_0^2} \leq K_{\rho_1, \rho_2} \| w_{1} - w_{2} \|_{H}\quad\mbox{and}\quad \|\sigma(w)\|_{\mathcal{L}_0^2} \leq K_{\rho}\left(1+\| w\|_{H}\right).
\end{equation}
 Then  $u_t=u(\cdot,t)$ can be interpreted as a predictable $H-$valued stochastic process. Next  recalling that  $A=- \Delta :\mathcal{D}(A)=W^{2,2}(\Omega) \cap W^{1,2}(\Omega) \subset H \rightarrow H$ then $-A$  is a generator of an analytic semi group  $\mathcal{G}(t)= e^{-tA}$ on $ H.$

  In the following we  introduce some concepts of solutions for  problem \eqref{LSP1u} that will be used  through the manuscript.
\begin{defn} \label{13}
A predictable $H-$valued stochastic process ${u_t: t \in [0,T]}$ such that
$$\mathbb{P}\left[\sup_{(x,t) \in D\times  [0,T]}|u_t(x)|<1\right]=1,$$
is called a weak solution of problem \eqref{LSP1u}  if for any
$v \in  \mathcal{D}(A)$ and for any $t \in [0,T]$,
\begin{equation} \label{14}
(u_t,v)=(u_0,v)+\int_0^t [-(u_s,Av)+(\lambda f(u_s),v)] ds + \int_0^t (\sigma(u_s) dW_s,v),  \quad  \mathbb{P}-a.s.,
\end{equation}
where $(\cdot,\cdot)$ stands for the inner product into   Hilbert space $ H=L^2(D).$   Note that the stochastic integral $\int_0^t (\sigma(u_s) dW_s,v)$ is well defined, cf. Theorem 2.4 in \cite{cho}.
\end{defn}
\begin{defn} \label{15}
A predictable $ H-$valued stochastic process ${u_t: t \in [0,T]}$ such that $$\mathbb{P}\left[\sup _{(x,t) \in D\times  [0,T]}|u_t(x)|<1\right]=1,$$  is called a mild solution of \eqref{LSP1u}  if for any $t \in [0,T],$  there holds
\begin{equation}
\label{16}
u_t=\mathcal{G}(t) z_0+\lambda \int_0^t \mathcal{G}(t-s) f(u_s)ds +\int_0^t \mathcal{G}(t-s) \sigma(u_s) dW_s, \quad  \mathbb{P}-\mbox{a.s.}\quad\mbox{and   a.e. in} \quad D.
\end{equation}
\end{defn}
Besides, the following interesting variation of problem  \eqref{LSP1u}  is also investigated in the current work
\bse\label{GSPz}
 \be\label{GSPza}
 du_t= \left( g(t) \Delta u_t+\lambda h(x,t) f(u_t)\right)dt+\kappa(t) (1-u_t) dW_t,\quad\mbox{in}\quad Q_T,
 \ee
 \be \label{GSPzb}
0\leq u_0<1,\; a.s.\;,
   \ee \ese
     where  $g, \kappa: \mathbb{R}_+\rightarrow \mathbb{R}_+$  and
   $h: D\times \mathbb{R}_+ \rightarrow \mathbb{R}_+$ are continuous and bounded funtions. It is also assumed that $g\in C^{1}(\mathbb{R}_+).$

Notably,  under the given assumptions for $g,$ cf. \cite{SV03},  then the Green's function $G$ associated with the deterministic problem
\bgee
&&\zeta_t =g(t)\Delta \zeta,
 \quad \mbox{in} \quad Q_T,\\
&& \mathcal{B}(\zeta)=\beta_c ,  \quad \mbox{on} \quad \Gamma_T,\\
 &&  0\leq \zeta(x,0)=\zeta_0(x)<1, \quad x \in D,
\egee
exists and   satisfies the growth conditions
\bge\label{GE}
\left|\partial^m_x\partial^{\ell}_t G(x,t;y,s)\right|\leq c(t-s)^{-\frac{d+|m|+2\ell}{2}}\exp\left[-\frac{|x-y|^2}{t-s}\right],
\ege
where  $m= (m_1, . . .,m_d)\in \mathbb{N}^N, \ell \in \mathbb{N}$  and $|m|+2\ell\leq 2,\; |m|=\sum_{j=1}^N m_j.$

Then we define the corresponding semigroup $\mathcal{E}(t)$ on $H=L^2(D)$  as follows
\bge\label{sdn}
\mathcal{E}(t)w(x):=\int_D G(x,t;y,0)\,w(y)dy\quad\mbox{for any}\quad x\in D\quad\mbox{and}\quad 0<t<T,
\ege
 for any  $T>0.$
 
Using estimates \eqref{GE} in conjunction with a standard approach, cf. \cite[Lemma 5.1]{cho} we then obtain the following key estimate
\begin{equation} \label{23}
\int_0^t \left \Vert \mathcal{E}(s) \right \Vert_{\mathcal{L}(H)} ^2 ds\leq K_1^2T, \quad 0<t<T.
\end{equation}
In a similar manner we define the notion of weak and mild solutions for problem \eqref{GSPz}.
\begin{defn} \label{13a}
A predictable $ H-$valued stochastic process ${u_t: t \in [0,T]}$ such that
$$\mathbb{P}\left[\sup _{(x,t) \in D\times  [0,T]}|u_t(x)|<1\right]=1,$$
is called a weak solution of problem \eqref{GSPz}  if for any
$v \in \mathcal{D}(A)$ and for any $t \in [0,T]$,
\begin{equation} \label{14a}
(u_t,v)=(u_0,v)+\int_0^t [-g(s)(u_s,Av) + \lambda( h(\cdot,s) f(u_s),v)] ds +\int_0^t ( \kappa(t) (1- u_s) dW_s,v),  \quad  \mathbb{P}-a.s.\,.
\end{equation}
\begin{defn} \label{15a}
A predictable $ H-$valued stochastic process ${u_t: t \in [0,T]}$ such that $$\mathbb{P}\left[\sup _{(x,t) \in D\times  [0,T]}|u_t(x)|<1\right]=1,$$  is called a mild solution of \eqref{GSPz}  if for any $t \in [0,T],$  there holds
\begin{equation}
\label{16a}
u_t=\mathcal{E}(t) u_0 +\lambda \int_0^t \mathcal{E}(t-s) h(\cdot,s)f(u_s)ds +\int_0^t \mathcal{E}(t-s) \widetilde{\sigma}(u_s) dW_s, \quad  \mathbb{P}- \mbox{a.s.}\quad\mbox{ and  a.e. in} \quad D,
\end{equation}
where  $ \widetilde{\sigma}(u_t)=\kappa(t)(1-u_t)$  satisfies clearly condition \eqref{fnew1a} for $\kappa(t)$ bounded.
\end{defn}
\end{defn}

\begin{rem}
 Note that  any  weak (variational) solution is a mild solution under the assumption of the local Lipschitz continuity of $f,$ see \cite{GR00}. Conversely, any regular enough mild  solution is also a weak solution, cf. \cite{Lord}. The weak formulations \eqref{14} and \eqref{14a} will be used in section \ref{eqp} for the investigation of the quenching behaviour, whilst in the following section some existence and uniqueness results for mild solutions are presented.
\end{rem}
Next we recall that It\^o's formula (see \cite[Theorem 5.2 page 88]{1}) entails
\bge\label{rcn1}
F(W_t)-F(W_0)=\int_0^t F'(W_s)dW_s + \frac{1}{2} \int_0^t F''(W_s)ds,
\ege
for any function $F\in C^2(\R),$ which in differential form gives
$$dF(W_t)=F'(W_t) dW_t+ \frac{1}{2} F''(W_t) dt.$$

Closing the current  section we recall the integration by parts  formula for stochastic processes. Indeed, if  $X_t$ and $Y_t$ are It\^o stochastic processes given by
\[
X_t=X_0+\int_0^t\Psi_s\,ds+\int_0^t \Phi_s \, dW_s\quad\mbox{and}\quad  Y_t=Y_0+\int_0^t\tilde{\Psi}_s\,ds+\int_0^t \tilde{\Phi}_s \, dW_s
\]
then
\bge\label{mmk5}
X_tY_t=X_0Y_0 + \int_0^t X_s dY_s +\int_0^t Y_sdX_s+\left[ X,Y\right]_t, \quad t\in[0,T]
\ege
where the last term in the above formula is the quadratic variation  of $X_t, Y_t$  and is defined as
\bge\label{mmk6}
\left[ X,Y\right]_t:=\int_0^t \Phi_s \tilde{\Phi}_s\,ds,
\ege
cf. \cite[Corollary 7.11  page 119]{1}.
\section{Local Existence}\label{le}
In the current we present  local existence and uniqueness results for problems \eqref{LSP1u} and \eqref{GSPz}.

Due to conditions \eqref{fnew1}, \eqref{gc1} and \eqref{fnew1a} we derive the following local-in-time existence and uniqueness  result for problem  \eqref{LSP1u}. 

\begin{thm} \label{thm1}
 Fix \,$0<\rho_0<1$ and consider initial data $u_0 \in L^2(\Om,\mathcal{D}(A))$ such that
 $\parallel u_0 \parallel_{L^2(\Om,\mathcal{D}(A))}<\rho_0$,
  then there exists $T=T(\rho_0)>0$ such that problems \eqref{LSP1u} admits a unique mild solution $u_t$ in $ [0,T]$.
Furthermore, there exists $C_T>0$ such that
\begin{equation}\label{17}
\sup _{0\leq t \leq T} \parallel u_t\parallel_{L^2(\Om,\mathcal{D}(A))}\leq C_T(1+\parallel u_0 \parallel_{L^2(\Om,\mathcal{D}(A))}).
\end{equation}
\end{thm}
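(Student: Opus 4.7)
The plan is to cast the mild-solution equation \eqref{16} as a fixed-point problem $u=\Phi(u)$ for the operator
$$
\Phi(u)(t):=\mathcal{G}(t)u_0+\lambda\int_0^t\mathcal{G}(t-s)f(u_s)\,ds+\int_0^t\mathcal{G}(t-s)\sigma(u_s)\,dW_s,
$$
and apply Banach's contraction mapping theorem on the closed ball
$$
\mathcal{B}_{T,\rho_1}:=\left\{u\in C\bigl([0,T];L^2(\Om,\mathcal{D}(A))\bigr):\sup_{0\leq t\leq T}\|u_t\|_{L^2(\Om,\mathcal{D}(A))}\leq \rho_1\right\}
$$
inside the Banach space $C([0,T];L^2(\Om,\mathcal{D}(A)))$, where $\rho_0<\rho_1<1$ and $T=T(\rho_0)>0$ are to be chosen. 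The Sobolev embedding $\mathcal{D}(A)\hookrightarrow L^{\infty}(D)$, available because $d\leq 3$, is what keeps $u_t$ away from the singularity $u=1$ of $f$ and makes the local Lipschitz and growth estimates \eqref{fnew1}, \eqref{gc1}, \eqref{fnew1a} applicable on $\mathcal{B}_{T,\rho_1}$ with constants depending only on $\rho_1$.

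For the self-mapping property $\Phi(\mathcal{B}_{T,\rho_1})\subset\mathcal{B}_{T,\rho_1}$ I would bound the three summands separately. The linear piece gives $\|\mathcal{G}(t)u_0\|_{L^2(\Om,\mathcal{D}(A))}\leq\|u_0\|_{L^2(\Om,\mathcal{D}(A))}<\rho_0$, since $\mathcal{G}(t)$ commutes with $A$ and is a contraction on $H$. The deterministic convolution is handled through the analytic-semigroup smoothing $\|A^{\theta}\mathcal{G}(r)\|_{\mathcal{L}(H)}\leq Cr^{-\theta}$ with a suitable $\theta\in(0,1)$ and the growth bound \eqref{gc1}, producing a factor $C_1(\rho_1)T^{\gamma_1}$ with $\gamma_1>0$. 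The stochastic convolution is controlled by the It\^o isometry
$$
\E\left\|\int_0^t\mathcal{G}(t-s)\sigma(u_s)\,dW_s\right\|^2_{\mathcal{D}(A)}\leq\E\int_0^t\|A\mathcal{G}(t-s)\sigma(u_s)\|^2_{\mathcal{L}_0^2}\,ds,
$$
combined with the linear-growth part of \eqref{fnew1a} and the trace-class property of $Q$; interpolating through the fractional power $\mathcal{D}(A^{\theta})$ with $\theta<1/2$ makes the singular kernel square-integrable in $s$, and yields a factor $C_2(\rho_1)T^{\gamma_2}$ with $\gamma_2>0$. Choosing $T$ small enough drives the two convolution contributions below $\rho_1-\rho_0$, whence $\Phi$ maps $\mathcal{B}_{T,\rho_1}$ into itself; the estimate \eqref{17} drops out of the same computation.

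The contraction step is obtained by rerunning these estimates on $\Phi(u)-\Phi(v)$, with the Lipschitz parts of \eqref{fnew1} and \eqref{fnew1a} now playing the role of the growth bounds, to give
$$
\sup_{0\leq t\leq T}\|\Phi(u)(t)-\Phi(v)(t)\|_{L^2(\Om,\mathcal{D}(A))}\leq L(T)\sup_{0\leq t\leq T}\|u_t-v_t\|_{L^2(\Om,\mathcal{D}(A))},
$$
with $L(T)\to 0$ as $T\to 0^{+}$. A further shrinking of $T$ makes $\Phi$ a strict contraction, so Banach's theorem delivers the unique mild solution on $[0,T]$. I expect the main technical difficulty to be exactly this control of the convolution terms in the graph norm of $A$ rather than in $H$: because $\|A\mathcal{G}(r)\|_{\mathcal{L}(H)}\sim r^{-1}$ has a non-integrable singularity and its square is worse, one cannot apply the bounds naively and must instead factor through fractional power spaces $\mathcal{D}(A^{\theta})$, both for the Bochner integral and for the It\^o isometry, so that the kernels become integrable in the appropriate sense. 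A secondary subtlety is that a mean-square bound on $\mathcal{B}_{T,\rho_1}$ does not by itself imply the pointwise a.s.\ constraint $u_t<1$ required by Definition \ref{15}; this is reconciled by choosing $\rho_1$ so that $\rho_1 C_{\mathrm{Sob}}<1$ and exploiting the Sobolev embedding pathwise on a set of full measure, or alternatively by a stopping-time truncation of the singular nonlinearity before passing to the limit.
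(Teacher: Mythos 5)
Your proposal matches the paper's overall strategy: both cast the mild formulation \eqref{16} as a fixed point of $\Phi$ on a closed ball in the space of $\mathcal{D}(A)$-valued predictable processes with the sup-in-time, $L^2(\Om,\mathcal{D}(A))$-norm, and both apply Banach's contraction theorem with the same three-term decomposition into semigroup, deterministic convolution, and stochastic convolution; your remark on the Sobolev embedding $\mathcal{D}(A)\hookrightarrow L^\infty$ controlling $\|u_t\|_\infty$ away from $1$ corresponds exactly to the paper's \eqref{20}. The technical route differs, though: the paper's detailed argument (given for the analogous Theorem~\ref{thm1a}, to which Theorem~\ref{thm1} is reduced by citing \cite{Kavallaris2016}) does not invoke analytic-semigroup smoothing at all. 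It uses plain operator-norm boundedness of the semigroup via \cite[Lemma 5.1]{cho} and \eqref{23}, and reads the local Lipschitz and growth conditions \eqref{fnew1}--\eqref{fnew1a} directly in the graph norm of $A$, so that no singular kernel $r^{-\theta}$ ever enters.

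You rightly flag a genuine subtlety -- \eqref{fnew1}--\eqref{fnew1a} as stated control $f$ and $\sigma$ only in the $H$-norm, while the fixed-point space is $L^2(\Om,\mathcal{D}(A))$ -- but your proposed remedy does not close the argument. Bounding the stochastic convolution via the It\^o isometry together with $\|A^\theta\mathcal{G}(r)\|_{\mathcal{L}(H)}\leq C r^{-\theta}$ and the restriction $\theta<1/2$ controls $\|A^\theta\Phi(u)(t)\|_{L^2(\Om,H)}$, i.e.\ it places $\Phi(u)(t)$ in $L^2(\Om,\mathcal{D}(A^\theta))$ for some $\theta<1/2$, which is strictly weaker than the target space $L^2(\Om,\mathcal{D}(A))$, so the self-mapping of $\mathcal{B}_{T,\rho_1}$ fails. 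To recover the full graph-norm estimate one must factor $A=A^{1-\alpha}A^\alpha$ with some $\alpha>1/2$ and show, separately, that the specific nonlinearities $f(u)=(1-u)^{-2}$ and $\sigma(u)=\kappa(1-u)$ map $\mathcal{D}(A)$ (intersected with the $L^\infty$-ball of radius $\rho_1$) into $\mathcal{D}(A^\alpha)$ with the corresponding Lipschitz and growth estimates; this extra regularity of $f,\sigma$ -- which relies on $d\leq 3$, the Sobolev algebra structure of $H^2\cap L^\infty$, and some care with boundary traces -- is neither stated in \eqref{fnew1}--\eqref{fnew1a} nor supplied by your proposal. The paper sidesteps the issue in \eqref{18-2nd} and \eqref{22} by tacitly treating $f$ and $\sigma$ as self-maps of $\mathcal{D}(A)$, which is what one should actually make precise here.
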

\begin{proof}
The proof is based on Banach's fixed point theorem  and it follows along the same lines with the proof of \cite[Theorem 4.4]{Kavallaris2016}  and thus it is omitted.
\end{proof}
 
Next  the existence and uniqueness for solutions of problem \eqref{GSPz} can be directly derived by \cite{SV03}, however in the sequel and for the sake of completeness we provide and prove such a result.
\begin{thm} \label{thm1a}
Suppose that the functions $g(t), \kappa_1(t)$ are bounded  in $[0,T]$, and $h(x,t)$ bounded in $D\times [0,T]$ for any $T>0.$ Assume further that $g(t)\in C^1([0,T]).$
Then for fixed $0<\rho_0<1$ and  initial data
$u_0 \in L^2(\Om,\textit{D}(A))$ such that $\parallel u_0 \parallel_{L^2(\Om,\textit{D}(A))}<\rho_0$,  there exists $T=T(\rho_0)>0$ such that problem \eqref{GSPz} admits a unique mild solution
$u_t$ in $ [0,T].$
Furthermore, there exists $C_T>0$ such that
\begin{equation}\label{eq16}
\sup _{0\leq t \leq T} \parallel u_t\parallel_{L^2(\Om,\textit{D}(A))}\leq C_T(1+\parallel u_0 \parallel_{L^2(\Om,\textit{D}(A))}).
\end{equation}
\end{thm}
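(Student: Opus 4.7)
The plan is to prove Theorem \ref{thm1a} by essentially the same contraction mapping argument used for Theorem \ref{thm1} (and ultimately going back to \cite[Theorem 4.4]{Kavallaris2016}), but replacing the analytic semigroup $\mathcal{G}(t)=e^{-tA}$ by the non-autonomous evolution family $\mathcal{E}(t)$ defined in \eqref{sdn}. The bookkeeping for $\mathcal{E}(t)$ is supplied by the Gaussian kernel estimates \eqref{GE} and, in integrated form, by \eqref{23}. Throughout, the $C^1$-assumption on $g$ enters only to guarantee that $\mathcal{E}(t)$ and its relevant derivatives satisfy \eqref{GE}; the boundedness of $g,\,\kappa,\,h$ provides the pointwise estimates $\|h\|_\infty\le M_h$ and $|\kappa(t)|\le M_\kappa$ used below.

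First, fix $\rho_1\in(\rho_0,1)$, pick $T>0$ to be shrunk later, and work on the Banach space $\mathcal{X}_T:=C([0,T];L^2(\Om,\mathcal{D}(A)))$ with the closed subset
\[
B_{\rho_1,T}:=\Bigl\{v\in\mathcal{X}_T:\ v\ \text{predictable},\ \sup_{0\le t\le T}\|v_t\|_{L^2(\Om,\mathcal{D}(A))}\le\rho_1,\ \mathbb{P}\bigl[\sup_{(x,t)}|v_t(x)|<1\bigr]=1\Bigr\},
\]
where the a.s.\ pointwise bound $<1$ is secured by combining the $\mathcal{D}(A)$-norm bound with the Sobolev embedding $\mathcal{D}(A)\hookrightarrow L^\infty(D)$ (valid for $d\le 3$), which is exactly the same device used in Theorem \ref{thm1}. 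Define
\[
(\mathcal{T}v)_t:=\mathcal{E}(t)u_0+\lambda\int_0^t\mathcal{E}(t-s)\,h(\cdot,s)\,f(v_s)\,ds+\int_0^t\mathcal{E}(t-s)\,\widetilde{\sigma}(v_s)\,dW_s.
\]
On $B_{\rho_1,T}$, the nonlinearities $f$ and $\widetilde{\sigma}$ act in a neighbourhood where \eqref{fnew1}, \eqref{gc1}, \eqref{fnew1a} apply with constants $C_{\rho_1},K_{\rho_1}$.

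Step 1 (self-mapping). Split $\|(\mathcal{T}v)_t\|_{L^2(\Om,\mathcal{D}(A))}^2$ into the three contributions. The first is $\|\mathcal{E}(t)u_0\|_{L^2(\Om,\mathcal{D}(A))}^2\le C\|u_0\|_{L^2(\Om,\mathcal{D}(A))}^2$ via \eqref{GE} with $|m|+2\ell=2$. For the drift, bound $|h|\le M_h$ and use \eqref{gc1} to get $\|f(v_s)\|_H\le C_{\rho_1}(1+\rho_1)$, and then use \eqref{GE} to estimate $\|\mathcal{E}(t-s)\|_{\mathcal{L}(\mathcal{D}(A))}$ integrated against $ds$; the result is $O(\sqrt{T})$. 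For the stochastic term, the It\^o isometry together with the linear-growth bound in \eqref{fnew1a} and the key estimate \eqref{23} yield a term of size $O(\sqrt{T})(1+\rho_1)$. Choosing $T$ small enough produces $\sup_{[0,T]}\|(\mathcal{T}v)_t\|_{L^2(\Om,\mathcal{D}(A))}\le\rho_1$.

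Step 2 (contraction). For $v,w\in B_{\rho_1,T}$,
\[
(\mathcal{T}v-\mathcal{T}w)_t=\lambda\int_0^t\mathcal{E}(t-s)\,h(\cdot,s)\,\bigl[f(v_s)-f(w_s)\bigr]ds+\int_0^t\mathcal{E}(t-s)\bigl[\widetilde{\sigma}(v_s)-\widetilde{\sigma}(w_s)\bigr]dW_s.
\]
The Lipschitz parts of \eqref{fnew1} and \eqref{fnew1a} combined with the semigroup estimates above and \eqref{23} give
\[
\|\mathcal{T}v-\mathcal{T}w\|_{\mathcal{X}_T}\le L(T)\|v-w\|_{\mathcal{X}_T},\qquad L(T)\to 0\ \text{as}\ T\to 0.
\]
Reducing $T$ further if necessary so that $L(T)<1$, Banach's fixed point theorem produces a unique $u\in B_{\rho_1,T}$ with $\mathcal{T}u=u$, i.e.\ a mild solution of \eqref{GSPz} on $[0,T]$. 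Applying the Step 1 estimate to this fixed point gives the a priori bound \eqref{eq16}.

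The main obstacle is the non-autonomous nature of $\mathcal{E}(t)$: it is an evolution family rather than a semigroup, so the standard analytic-semigroup machinery (e.g.\ fractional-power bounds $\|A^\alpha\mathcal{G}(t)\|\le C t^{-\alpha}$) is not directly available. This is circumvented by working throughout with the Green-function estimates \eqref{GE} and the integrated $L^2$-bound \eqref{23}; concretely, what would otherwise be semigroup smoothing is replaced by the differentiated kernel bounds (with $|m|+2\ell\le 2$), and the $C^1$-hypothesis on $g$ is precisely what is needed to justify those estimates, cf.\ \cite{SV03}.
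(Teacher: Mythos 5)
Your proof follows essentially the same route as the paper's: a Banach fixed-point argument on a ball $S_{\rho,T}$ in the predictable-process space with norm $\sup_{[0,T]}\|\cdot\|_{L^2(\Omega,\mathcal D(A))}$, with the three contributions to the mild-solution map bounded via the kernel estimate \eqref{GE}, the integrated bound \eqref{23}, It\^o isometry, and the local Lipschitz/growth conditions \eqref{fnew1}, \eqref{gc1}, \eqref{fnew1a}, exactly as in the paper. The only cosmetic differences are that the paper records the drift contribution as $O(T)$ (you wrote $O(\sqrt{T})$, which is harmless) and obtains \eqref{eq16} via Gronwall rather than by absorbing the small factor into the left-hand side as you do; both yield the stated a priori bound.
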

\begin{proof}
We first note that since $g(t)$ is bounded then $\mathcal{D}\left(g(t)A\right)=\mathcal{D}\left(A\right)=W^{2,2}(\Omega) \cap W^{1,2}(\Omega)$ recalling that $A=-\Delta.$

Denote by $S_T$ the Banach  space of $H-$valued predictable process ${u_t:t \in [0,T]}$ equipped with the norm
$$\parallel u_t \parallel_{S_T}:= \sup_{0\leq t\leq T} \parallel u_t\parallel _{L^2(\Om,\mathcal{D}(A))}.$$
Now, for any $\rho$, with $0<r_0<\rho<1$ we set,
$$S_{\rho,T}:= \left\lbrace u_t \in S_T: \parallel u_t \parallel _{S_{\rho,T}}:= \sup _{0\leq t \leq T} \parallel u_t \parallel _{L^2(\Om,\mathcal{D}(A))} \leq \rho <1 \right\rbrace$$
and we define the operator  $\mathcal{M}:  S_{\rho,T}\rightarrow S_T$ as follows
\begin{equation} \label{18}
\mathcal{M}(u_t):=\mathcal{E}(t) u_0 +\lambda\int_0^t \mathcal{E}(t-s) f(u_s)ds+\int_0^t \mathcal{E}(t-s)\kappa(t)  \sigma(u_s) dW_s.
\end{equation}
The main idea is to apply  Banach's fixed point theorem  to  prove existence and uniqueness of the equation $\mathcal{M}(u_t)=u_t$ in $S_{\rho,T}.$
\\
$\textbf{\emph{Step 1:}}$  We first show that  $\mathcal{M}$ maps  $S_{\rho,T}$ into itself.
To this end  note that $\mathcal{M}(u_t)$ is a $H-$valued predictable process because $ u_0$ is 
$F_0-$measurable and the stochastic integral is a predictable process. So, it suffices to prove that
 $\parallel \mathcal{M}(u_t)\parallel_{S_{\rho,T}}< \rho$.

By the hypothesis on the initial data $u_0$, and using \eqref{sdn} in conjunction  with \cite[Lemma 5.1]{cho} we have regarding the first term in \eqref{18}
\begin{equation}\label{19}
\parallel \mathcal{E}(t) u_0 \parallel _{S_{\rho,T}}\leq \parallel u_0 \parallel_ {S_{\rho,T}}<\rho_0.
\end{equation}

 Also for $u_t \in S_{\rho,T}$ and by virtue of Sobolev's inequality there is $\rho_0$ small enough such that
\begin{equation}
\label{20}
\mathbb{E}[\parallel u_t \parallel _{\infty}]\leq \rho_0 <1
\end{equation}
 implying that  conditions \eqref{fnew1}-\eqref{fnew1a} hold true.

So using now the growth condition \eqref{gc1}, \eqref{GE}  and  \cite[Lemma 5.1]{cho} we  derive that
\begin{eqnarray} \nonumber
\left\Vert\int_0^t \mathcal{E}(t-s)h(\cdot, s) f(u_s)ds \right\Vert_{S_{\rho,T}}
&\leq &
\int_0^t \left\Vert \mathcal{E}(t-s) h(\cdot, s)f(u_s) \right\Vert_{S_{\rho,T}}ds\leq
N_T\int_0^t \left\Vert f(u_s)\right\Vert_{S_{\rho,T}}ds\\
&\leq &
N_T\int_0^t C_{\rho_0} (1+ \left\Vert u_s \right\Vert_{S_{\rho,T}})ds
\nonumber
\\
&\leq & N_T C_{\rho_0} T \left(1+ \sup_{0\leq s\leq T} \parallel u_s \parallel _{S_{\rho,T}}\right)\leq 2 C_{\rho_0, T} T,\label{18-2nd}
\end{eqnarray}
for any $0<t<T$ where $N_T:=\max_{\bar{D}\times [0,T]} h(x,t)$ and $C_{\rho_0, T}:=N_T C_{\rho_0}.$
Next via  It\^o's isometry, \cite[page 322 ]{Lord},
and  by virtue of   \eqref{fnew1a} and \cite[Exercise 10.7 page 480]{Lord}  we have that for any $0<t<T$
\begin{eqnarray} \nonumber
\left\Vert \int_0^t \mathcal{E}(t-s)\kappa(t) \sigma( u_s) dW_s  \right\Vert _{S_{\rho,T}} ^2
&=&\int_0^t \mathbb{E} \left[ \left \Vert\mathcal{E}(t-s)\kappa(t) \sigma(u_s) \right\Vert _{L_0 ^2} ^2 \right] ds \\
&\leq & \left(\kappa_T  K_{\rho_0}\right) ^2 \int_0^t \left\Vert  \mathcal{E} (t-s) \right\Vert _{\mathcal{L}(H)} ^2 ds \left( 1+ \sup_{0\leq s \leq t} \left\Vert u_s \right \Vert _{S_{\rho,T}} \right)^2 \qquad\label{22}
\end{eqnarray}
where $\kappa_T:=\max_{[0,T]}\kappa(t).$

Recalling now the  semigroup estimate  \eqref{23} then \eqref{22} finally reads
\begin{equation} \label{24}
\left\Vert \int_0^t \mathcal{E}(t-s)\kappa(t)  \sigma(u_s) dW_s  \right\Vert _{S_{\rho,T}} \leq 2 \widetilde{K}_TT^{\frac{1}{2}},\quad 0\leq t \leq T
\end{equation}
for $\widetilde{K}_T$ a positive constant.

Finally, combining  the above relations,  \eqref{19},\eqref{18-2nd} and \eqref{24}
 we derive from equation \eqref{18} that
\begin{equation}\label{25}
\left \Vert \mathcal{M}(u_t) \right \Vert_{S_{\rho,T}}<\rho_0+2 \lambda C_{\rho_0, T}T + 2 \widetilde{K}_T T^{\frac{1}{2}}\leq \rho_0+2 \left(\lambda C_{\rho_0}T +  \widetilde{K}_T T^{\frac{1}{2}}\right).
\end{equation}
Next taking  $T$  small enough, say smaller than some $T_1=(\lambda,\rho, \rho_0),$ such that
$$2 \left(\lambda C_{\rho_0, T}T +  \widetilde{K} T^{\frac{1}{2}}\right)< \rho-\rho_0, \quad \mbox{for any} \quad 0<T<T_1,$$
then  (\ref{25}) reads
\begin{equation}\label{26}
\left \Vert \mathcal{M}(u_t) \right \Vert_{S_{\rho,T}}\leq \rho_0+\rho-\rho_0=\rho,
\end{equation}
hence  $\mathcal{M}$ maps  $S_{\rho,T}$ to itself.

$\textbf{\emph{Step 2:}}$ Next we show that  $\mathcal{M}$ is a contraction operator, that is  there is a positive constant $0<\gamma<1$ such that
\begin{equation}\nonumber
\left\Vert \mathcal{M}(u_t)-\mathcal{M}(v_t)\right\Vert_{S_{\rho,T}}\leq \gamma \left\Vert u_t-v_t \right\Vert_{S_{\rho,T}}.
\end{equation}
In fact 
\begin{eqnarray}
\mathcal{M}(u_t)-\mathcal{M}(v_t)
 &=&\lambda
 \int_0^t \mathcal{E}(t-s) h(\cdot,s) \left(f(u_s)-f(v_s)\right) ds  +\int_0^t \mathcal{E}(t-s) \kappa(s) \left( v_s- u_s \right)dW_s,
 \nonumber
\end{eqnarray}
which implies
\begin{eqnarray}
\left\Vert \mathcal{M}(u_t)-\mathcal{M}(v_t)\right\Vert_{{L^2(\Om,\mathcal{D}(A))}} ^2 &=&
\left\Vert \lambda\int_0^t \mathcal{E}(t-s) h(\cdot,s)\left(f(u_s)-f(v_s)\right) ds\right.\nonumber\\
 &+& \left.  \int_0^t \mathcal{E}(t-s) \kappa(s) \left(  v_s-u_s \right)dW_s\right\Vert_{L^2(\Om,\mathcal{D}(A))}^2
\nonumber\\
&\leq &
(\lambda N_T)^2 \left\Vert \int_0^t\mathcal{E} (t-s) \left(f(u_s)-f(v_s)\right) ds\right\Vert_{L^2(\Om,\mathcal{D}(A))}^2\nonumber\\
&+& \kappa^2_T\left\Vert\int_0^t \mathcal{E}(t-s) \left(  v_s- z_s \right)dW_s\right\Vert_{L^2(\Om,\mathcal{D}(A))}^2\label{ub1}.
\end{eqnarray}
The first term in the RHS of  \eqref{ub1} using \eqref{fnew1}, \eqref{GE} and  \cite[Lemma 5.1]{cho}  is estimated as  follows
\bge\label{ub2}
(\lambda N_T)^2 \left\Vert \int_0^t\mathcal{E} (t-s) \left(f(u_s)-f(v_s)\right) ds\right\Vert_{L^2(\Om,\mathcal{D}(A))}^2\leq(\lambda N_T C_{\rho_0} T)^2  \left \Vert u_t-v_t \right \Vert_{S_{\rho,T}} ^2 .
\ege
On the other hand, relation \eqref{fnew1a}  in conjunction  with  It\^o's  isometry,  \eqref{23}  and  \cite[Exercise 10.7 page 480]{Lord} infers
\bge\label{ub3}
\kappa^2_T\left\Vert\int_0^t \mathcal{E}(t-s) \left( u_s- v_s \right)dW_s\right\Vert_{L^2(\Om,\mathcal{D}(A))}^2\leq \kappa^2_T K_1^2 T  \left \Vert u_t-v_t \right \Vert_{S_{\rho,T}} ^2.
\ege
Now \eqref{ub1} by virtue of \eqref{ub2} and \eqref{ub3} reads
\begin{eqnarray*}
\left\Vert \mathcal{M}(u_t)-\mathcal{M}(v_t)\right\Vert_{S{\rho,T}} ^2
 &\leq &
 (\lambda N_T C_{\rho_0} T)^2  \left \Vert u_t-v_t \right \Vert_{S_{\rho,T}} ^2+
  \kappa^2_T K_1^2 T  \left \Vert u_t-v_t \right \Vert_{S_{\rho,T}} ^2
\nonumber\\
&\leq& (\lambda^2 N^2_T C^2_{\rho_0} T^2+  \kappa^2_T K_1^2 T ) \left \Vert u_t-v_t \right \Vert_{S_{\rho,T}} ^2
\nonumber\\
&\leq & (\lambda^2 N^2_T C^2_{\rho_0} +  \kappa^2_T K_1^2 ) T \left \Vert u_t-v_t \right \Vert_{S_{\rho,T}} ^2\nonumber\\
&\leq &  \frac{1}{2} \left \Vert u_t-v_t \right \Vert_{S_{\rho,T}} ^2
\end{eqnarray*}
provided that $$T< T_2:=\min\left\{1, \frac{1}{2(\lambda^2 N^2_T C^2_{\rho_0} +  \kappa^2_T K_1^2 )}\right\},$$
and  thus  $\mathcal{M}$  is a contraction in $S_{\rho,T}$ provided that  $T<T_2.$

Consequently, by choosing $T_0=\min\{T_1,T_2\},$ we derive that $\mathcal{M}$  has a unique fixed point in $S_{\rho,T}$ for $0<T<T_0$ by Banach's fixed point theorem and thus problem \eqref{GSPz}  has a unique mild solution in the time interval  $[0,T_0].$

Accordingly,  a direct application of Gronwall's inequality  infers estimate \eqref{17}.
\end{proof}

\begin{rem}
Due to the obtained regularity, see \eqref{17}, the mild solution provided by Theorem \ref{thm1} is actually a weak solution, cf. \cite{SV03}.
\end{rem}

  Note that if set $z=1-u$  where $u$ is the solution of  \eqref{LSP}  then $z$ satisfies
\bse\label{LSPza}
 \be\label{LSP1za}
\frac{\partial z}{\partial t}=\Delta z-\frac{\lambda}{z^2 }-\kappa z\partial_t W(x,t),
 \quad \mbox{in} \quad Q_T,
 \ee
 \be \label{LSP2za}
  \mathcal{B}(1-z)=\beta_c\quad \mbox{on} \quad \Gamma_T,
  \ee
  \be\label{LSP3za}
 0<z_0(x):=z(x,0)=1-u_0(x)=\xi(x)\leq 1, \quad x \in D.
   \ee \ese
   In particular,  if $u=0$ on $\Gamma_T$ this results in $z=1$  for condition \eqref{LSP2za}, or otherwise into $\frac{\partial z}{ \partial \nu} +\beta  z=0$ if $u$ satisfies the boundary condition $\frac{\partial u}{ \partial \nu} =\beta  (1-u)$.

Accordingly  problem \eqref{LSPza} can be considered as an It\^o equation in the Hilbert space $H=L^2(D)$ and so it can be written by suppressing the dependence on space as follows:
\bse\label{LSP1z}
 \be\label{LSPz1}
dz_t=\left(\Delta z_t-\frac{\lambda}{z^2_t}\right)dt -\kappa z_t  dW_t, \quad \mbox{in} \quad Q_T,
 \ee
 \be \label{LSPz2}0<z_0=\xi\leq1,\; a.s.\;
   \ee \ese
   and its local existence and uniqueness stems from Theorem \ref{thm1}.
   
Besides, if $u_t$ satisfies \eqref{GSPz}  then $z_t=1-u_t$  solves the following  It\^o's problem 
\bse\label{GSPzu}
 \be\label{GSPzua}
 dz_t= \left( g(t) \Delta z_t-\lambda h(x,t) z_t^{-2}\right)dt-\kappa(t)  z_t dW_t,\quad\mbox{in}\quad Q_T,
 \ee
 \be \label{GSPzub}
0<z_0=\xi\leq1,\; a.s.\;
   \ee 
\ese
for which local existence and uniqueness is guaranteed by Theorem \ref{thm1a}.

Remarkably, problems \eqref{LSP1z} and \eqref{GSPzu} are more appropriate for the analysis of the quenching behaviour delivered  in the following section.

\section{ Estimation of Quenching Probability 
}\label{eqp}

\subsection{The basic model \eqref{LSP1z}}\label{bp}
 In the sequel we will first  investigate the quenching behaviour of problem \eqref{LSP1z}, whose solution can be expressed as an  It{\^o} process as follows
\bge\label{mmk1}
z_t=z_0-\kappa \int_0^t z_s dW_s+\int_0^t \left(\Delta z_s-\frac{\la}{z_s^2}\right)\,ds.
\ege
Remarkably, the analysis that follows applies to the  imposed homogeneous Robin boundary condition
$\frac{\partial z_t}{\partial \nu}+\beta z_t =0$ which corresponds to the situation that a boundary condition
\eqref{LSP2} is applied for $\beta_c>0$. The nonhomogeneous Robin boundary condition, arises for  $\beta_c=0$ is treated only numerically in section \ref{na}.

We  define now  the  stochastic process
\be\label{mmk2}
 v_t= e ^{\kappa W_t} z_t, \quad 0\leq t<\tau ,
\ee
cf.\cite{DLM}, where $\tau$  identifies a (random) stopping time, which is actually the quenching time for  both $z_t$ and $v_t. $  In particular, for any stochastic process satisfying  \eqref{mmk1}  there holds
\[
\limsup_{t\rightarrow \tau} \inf_{x\in D}|z_t(x)|= 0, \quad a.s. \quad \mbox{in}
\quad {\tau< +\infty}.
\]

 Next using It\^o's formula \eqref{rcn1} for $F(u)=e^{\kappa u}$  we obtain
 \begin{eqnarray}
 \label{1}
 e^{\kappa W_t} &=& e^{\kappa W_0}  +\kappa \int_0^t  e^{\kappa W_s}dW_s + \frac{\kappa ^2}{2} \int_0 ^t e^{\kappa W_s}ds
 \nonumber\\
 &=&
 1 +\kappa \int_0^t  e^{\kappa W_s}dW_s + \frac{\kappa ^2}{2} \int_0 ^t e^{\kappa W_s}ds,
 \end{eqnarray}
since  $W_0=0,$ or equivalently
\begin{equation}
\label{2}
d(e^{\kappa W_t})=\kappa e^{\kappa W_t}dW_t+\frac{\kappa ^2}{2} e^{\kappa W_t}.
\end{equation}
 In the sequel, we use for simplicity  the notation
\bgee
z_t(\phi):=\int_D z_t \phi\,dx, \; t\geq 0,
\egee
for any function $\phi \in C^2(D).$ 

Then problem (\ref{mmk1}), using also second Green's formula,  can be written in a weak formulation as follows
\begin{eqnarray}
z_t(\phi)= z_0(\phi)
&+& \int_0^t \int_{\partial D} \left[ \frac{\partial z_s}{\partial \nu} \phi
- z_s \frac{\partial \phi}{\partial \nu}\right ] d \sigma ds
+ \int_0^t  z_s(\Delta \phi)ds\nonumber\\
&-& \lambda \int_0^t z_s^{-2}(\phi) ds
-\kappa  \int_0^t z_s(\phi) dW_s,\label{3}
\end{eqnarray}
for some test  function $\phi$ smooth enough, where
\bgee
z^{-2}_s(\phi):=\int_D z^{-2}_s \phi\,dx.
\egee

Next we take as a test function
 $\phi \in C^2(D)$  satisfying 
\begin{eqnarray}
&&-\Delta \phi =\lambda_1 \phi, \quad  x \in D,
\label{eigena}\\
&&\frac{\partial \phi}{\partial \nu}+\beta \phi =0,  \quad x \in \partial D,\label{eigenb}
\end{eqnarray}
normalized as 
\bge 
\int_{D} \phi(x)dx =1\label{eigenc}.
\ege
Note that   the principal eigenvalue $\lambda_1$  is positive for $\beta \neq 0,$  cf. \cite[Theorem 4.3]{Am76}.

 In particular the boundary integral  in \eqref{3} thanks to the applied homogeneous Robin-type boundary conditions gives
$$\int_{\partial D} \left[\frac{\partial z_t}{\partial \nu} \phi -z_t \frac{\partial \phi}{\partial \nu}\right] d\sigma=\int_{\partial D }\left(-\beta z_t \phi + \beta z_t \phi\right) d \sigma=0,$$
and thus the weak formulation  \eqref{3}  reduces to
\bge\label{wfc}
z_t(\phi) =  z_0(\phi)
 + \int_0^t  z_s(\Delta \phi)\, ds
 - \lambda \int_0^t z_s^{-2}(\phi) ds
-\kappa  \int_0^t z_s(\phi) dW_s.
 \ege


Applying now the integration by parts formula \eqref{mmk5}  to the  It\^o's processes defined by \eqref{mmk1} and \eqref{1} we have
\[
v_t=e^{\kappa W_t} z_t=e^{\kappa W_0} z_0+ \int_0^t e^{\kappa W_s} dz_s
+ \int_0^t z_s de^{\kappa W_s}+\left[e^{\kappa W_s}, z_s \right](t), \]
where the quadratic variation is given by
\bge\label{mtk1a}
\left[e^{\kappa W_s}, z_s \right](t) =-\kappa ^2\int_0^t e^{\kappa W_s}z_s\,ds, \quad t\geq 0,
\ege
and thus
\begin{eqnarray}\label{veq1}
v_t=z_0+ \int_0^t e^{\kappa W_s}dz_s+\int_0^t z_s de^{\kappa W_s} - \kappa ^2 \int_0^t e^{\kappa W_s}z_s\,ds.
\end{eqnarray}

Next  multiplying \eqref{veq1} by $\phi$ and integrating over the domain $D$ we obtain
\begin{eqnarray}\label{mmk3}
v_t(\phi) &=& z_0(\phi)
+ \int_0^t  e^{\kappa W_s}\left[\int_D \left(\Delta z_s-\lambda z_s^{-2} \right) \phi\, dx  \right]\,ds
- \kappa  \int_0^t e^{\kappa W_s}z_s(\phi)\, dW_s
\nonumber\\
&&+ \kappa  \int_0^t e^{\kappa W_s}z_s(\phi)\, dW_s
+ \frac{\kappa ^2}{2} \int_0^t e^{\kappa W_s}z_s(\phi)\,ds
 -\kappa ^2 \int_0^t e^{\kappa W_s} z_s(\phi)\,ds
\nonumber\\
&=&z_0(\phi) + \int_0^t  e^{\kappa W_s}\left[\int_D \left(\Delta z_s-\lambda z_s^{-2} \right) \phi\, dx  \right]\,ds
 - \frac{\kappa ^2}{2} \int_0^t z_s(\phi) e^{\kappa W_s}ds
 \nonumber\\
 &=& z_0(\phi) + \int_0^t e^{\kappa W_s}   z_s(\Delta \phi)\, ds
 - \lambda\int_0^t e^{\kappa W_s}  z_s^{-2}( \phi)\, ds - \frac{\kappa ^2}{2} \int_0^t  z_s(\phi) e^{\kappa W_s}\, ds \nonumber\\
&=& z_0(\phi)- \lambda_1 \int_0^t e^{\kappa W_s} z_s(\phi)\, ds  - \lambda\int_0^t e^{\kappa W_s}  z_s^{-2}( \phi)\, ds - \frac{\kappa ^2}{2} \int_0^t  e^{\kappa W_s} z_s(\phi)\, ds ,
\end{eqnarray}
using  also \eqref{LSPz1}  and \eqref{2}  together with second Green's identity.

 Next expressing \eqref{mmk3}   in terms of the  $v_t,$ and  since $z_t=v_t e^{-\kappa W_t},$ then thanks to \eqref{mmk2} we infer
\begin{eqnarray}
v_t(\phi) &=& z_0(\phi)- \lambda_1  \int_0^tv_s(\phi)\, ds
-\lambda \int_0^t e^{3\kappa W_s} v_s^{-2}(\phi)\,ds  - \frac{\kappa ^2}{2} \int_0^t v_s(\phi)\,ds\nonumber\\
&=& v_0(\phi)- \left(\lambda_1+\frac{\kappa^2}{2}\right)\int_0^t v_s(\phi)\,ds-\lambda \int_0^t e^{3\kappa W_s} v_s^{-2}(\phi)\,ds,\quad
\label{veq2}
\end{eqnarray}
since $z_0(\phi)=v_0(\phi)$ due to \eqref{mmk2}.

Then \eqref{veq2} implies
\begin{eqnarray}
\frac{v_{t+\epsilon}(\phi)- v_t(\phi)}{\epsilon} &=&
 \frac{1}{\epsilon} \left[ - \left(\lambda_1+\frac{\kappa^2}{2}\right)\int_t^{t+\epsilon} v_s(\phi)\,ds
-\lambda \int_t^{t +\epsilon} e^{3\kappa W_s} v_s^{-2}(\phi)\,ds\right],
   \label{66}
\end{eqnarray}
and   letting $\epsilon \rightarrow 0 $ in equation (\ref{66}) we derive
 \bge
 \label{veq8}
 \frac{d v_t(\phi)}{dt}=- \left(\lambda_1+\frac{\kappa^2}{2}\right) v_t(\phi)-\lambda e^{3\kappa W_t}  v_t^{-2}( \phi)\,\;t>0,\quad v_0(\phi)>0.
 \ege
By virtue of Jensen's inequality, since $r(s)=s^{-2}, s>0$ is convex,  and  via \eqref{eigenc} we have
\bgee
v_t^{-2}(\phi)=\int_D v_t^{-2} \phi\, dx \geq \left( \int_D v_t \phi\, dx \right)^{-2}=(v_t(\phi))^{-2}
\egee
and thus \eqref{veq8} leads to the  following differential inequality
\bgee
 \frac{d v_t(\phi)}{dt} \leq - \left(\lambda_1+\frac{\kappa^2}{2}\right) v_t(\phi) - \lambda e^{3\kappa W_t}(v_t(\phi))^{-2} , \quad v_0(\phi)>0.
\egee
 By a standard comparison principle we have that $v_t(\phi)\leq B(t)$ where $B(t)$ satisfies the following
 Bernoulli differential equation:
\bgee
B'(t)=- \left(\lambda_1+\frac{\kappa^2}{2}\right) B(t) -\lambda  e^{3\kappa W_t} B^{-2}(t), \quad B_0=B(0)= v_0(\phi)>0,
\egee
and is given by
 \bge\label{mmk4}
 B(t)&=&e^{- \left(\lambda_1+\frac{\kappa^2}{2}\right) t}
 \left[B_0^3-3\lambda \int_0^t e^{3\left[\left(\la_1+\frac{\kappa^2}{2}\right)s+\kappa W_s\right]} ds \right]^{1/3}.
 \ege
 Next taking into account \eqref{mmk4} we can define the stopping (quenching) time for $B(t)$ as
 \bgee
 \tau_1:=\inf \left\lbrace  t\geq 0 \Big| \int_0^t e^{3\left[\left(\la_1+\frac{\kappa^2}{2}\right)s +\kappa W_s\right]} ds \geq \frac{1}{3\lambda} B_0^3 \right\rbrace,
 \egee
 and so it follows that $B(t)$ extincts to zero in finite time on the event $\left\lbrace \tau < + \infty \right\rbrace$. The fact  that $0\leq v_t(\phi)\leq B(t)$ implies that $\tau_1$ is an upper bound of the stopping (quenching) time $\tau$ for $v_t(\phi),$ hence the function
\[
t\mapsto\int_D e^{\kappa W_t} z_t(x)\phi(x)\,dx
\]
quenches in finite time under the event  $\left\lbrace \tau_1 < + \infty\right\rbrace.$ Using now \eqref{eigenc} as well as the fact that $t\mapsto e^{\kappa W_t}$  is bounded away from zero on $[0,\tau_1],$ since $\tau_1$  is finite (cf. \eqref{psk4} and  \eqref{psk5} below),  then  we deduce that the function $t\mapsto \inf_D z_t$ cannot  stay away from zero on $[0,\tau_1]$ when $\tau_1<\infty.$ Consequently, $z_t$ also quenches in finite time on the event $\left\lbrace \tau_1 < + \infty\right\rbrace$  and $\tau_1$ is an upper bound for the quenching time of $z_t.$

 In the sequel we are working towards the estimation of the probability of the event $\left\lbrace \tau_1 = + \infty\right\rbrace,$ so   we have
  \begin{eqnarray}
 \mathbb{P}[\tau_1=+\infty]&=&
 \mathbb{P}\left[\int_0 ^t e^{3\kappa W_s+3(\lambda_1+\frac{\kappa ^2}{2})s} ds < \frac{1}{3\lambda}B_0^3, \quad \mbox{for all} \quad t>0 \right]\nonumber\\
 &=&
\mathbb{P}\left[\int_0 ^{+\infty} e^{3\kappa W_s+3(\lambda_1+\frac{\kappa ^2}{2})s} ds \leq
 \frac{1}{3\lambda}B_0^3 \right].
 \end{eqnarray}
 Then by virtue of the law of the iterated logarithm for $W_t,$  cf.  \cite{A95, DKLM}, that is
\bge&&\lim \inf_{t \to + \infty} \frac{W_t}{t^{1/2} \sqrt{2 \log ( \log t)}}=-1, \quad  \mathbb{P}- a.s.\; ,\label{psk4}\\
&&\mbox{and}\no\\
&&\lim \sup_{t \to +\infty} \frac{W_t}{t^{1/2} \sqrt{2 \log ( \log t)}}=+1, \quad  \mathbb{P}- a.s.\; ,\label{psk5}
\ege
we deduce that for any sequence $t_n\to +\infty$
\bgee
W_{t_n} \sim \alpha_n t_n^{1/2} \sqrt{2 \log (\log t_n)},
\egee
with $\alpha_n\in[-1,1],$ and thus
\bgee
\int_0^{+\infty} e^{3\kappa W_s+3(\lambda_1+\frac{\kappa ^2}{2})s}ds=+\infty.
\egee
The latter implies that
\begin{eqnarray}
\mathbb{P} \left[\tau_1=+ \infty \right]= \mathbb{P} \left[ \int_0 ^{+\infty} e^{3\kappa W_s+3(\lambda_1+\frac{\kappa ^2}{2})s}ds \leq \frac{1}{3\lambda} B_0 ^3 \right]=0,\nonumber
\end{eqnarray}
and hence
\begin{eqnarray}
\mathbb{P}\left[\tau_1 <+\infty \right]= 1-\mathbb{P}[\tau_1=+\infty]=1-0=1.
\end{eqnarray}
 Therefore
 $B(t)$ and consequently $ v_t(\phi)$ quenches a.s.  which in turn implies  that   $z_t(\phi)$ quenches a.s. as well. The latter entails, due also to \eqref{eigenc},  that
 \bgee
z_t(\phi)=\int_D  z_t(x)\phi(x)\,dx\geq  \inf_{x\in D}|z_t(x)|
 \egee
 and thus
 \bgee
 \limsup_{t\rightarrow \tau} \inf_{x\in D}|z_t(x)|= 0,
 \egee
 for some $\tau\leq \tau_1$ and independently of the initial condition $z_0$ and the parameter value $\lambda.$
Thus we have the following result.
 \begin{thm} \label{thm2} 
The weak solution of problem  (\ref{LSP1z}) quenches in finite time with probability one, i.e. almost surely,  regardless the size of its initial condition as well as that of parameter $\lambda$.
\end{thm}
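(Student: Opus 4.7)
The plan is to reduce the stochastic quenching problem to a comparison with an explicitly integrable random ODE involving an exponential functional of Brownian motion, whose behaviour can then be analyzed via the law of the iterated logarithm. The first move I would make is to absorb the multiplicative noise by the change of variables $v_t := e^{\kappa W_t} z_t$. Using It\^o's formula on $F(w)=e^{\kappa w}$ gives $d(e^{\kappa W_t}) = \kappa e^{\kappa W_t} dW_t + \tfrac{\kappa^2}{2} e^{\kappa W_t} dt$, and applying the integration by parts rule \eqref{mmk5} to the product $e^{\kappa W_t} z_t$ (whose quadratic covariation equals $-\kappa^2 \int_0^t e^{\kappa W_s} z_s ds$) causes the two stochastic integrals to cancel exactly, leaving an equation for $v_t$ driven only by the time-dependent random weight $e^{3\kappa W_s}$ in front of the nonlinearity $v_s^{-2}$.

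Next, to reduce the resulting SPDE for $v_t$ to a scalar random ODE, I would test against a positive deterministic test function $\phi$. The natural choice is the principal eigenfunction of $-\Delta$ with the homogeneous Robin condition $\partial_\nu \phi + \beta \phi = 0$ normalized by $\int_D \phi\, dx = 1$; since $z_t$ satisfies the same homogeneous Robin condition (after the translation $u \mapsto 1-u$, given that $\partial_\nu u = \beta(1-u)$ becomes $\partial_\nu z = -\beta z$), the boundary term from Green's second identity vanishes and the Laplacian contribution collapses to $-\lambda_1 v_t(\phi)$ with $\lambda_1 > 0$. Collecting all contributions one obtains the closed scalar equation
\begin{equation*}
\frac{d v_t(\phi)}{dt} = -\left(\lambda_1 + \tfrac{\kappa^2}{2}\right) v_t(\phi) - \lambda e^{3\kappa W_t} v_t^{-2}(\phi), \quad v_0(\phi) > 0.
\end{equation*}

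Applying Jensen's inequality to the convex function $r \mapsto r^{-2}$ and using $\int_D \phi\, dx = 1$ yields $v_t^{-2}(\phi) \geq (v_t(\phi))^{-2}$, so by standard comparison $v_t(\phi) \leq B(t)$ where $B$ solves a random Bernoulli ODE. Reducing by $Y = B^3$ gives the explicit formula
\begin{equation*}
B(t) = e^{-(\lambda_1 + \kappa^2/2) t}\left[B_0^3 - 3 \lambda \int_0^t e^{3[(\lambda_1 + \kappa^2/2)s + \kappa W_s]}\, ds\right]^{1/3},
\end{equation*}
and hence $B$, and therefore $v_t(\phi)$, vanishes at the stopping time $\tau_1$ at which the exponential functional crosses the deterministic threshold $B_0^3/(3\lambda)$.

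The last and most delicate step is to establish $\mathbb{P}[\tau_1 < \infty] = 1$, equivalently that the integral $\int_0^\infty e^{3\kappa W_s + 3(\lambda_1 + \kappa^2/2) s}\, ds$ diverges almost surely. Here I would invoke Khinchin's law of the iterated logarithm, which guarantees $|W_s| = O(\sqrt{s \log\log s})$ a.s., so the linear drift $3(\lambda_1 + \kappa^2/2) s$ eventually dominates the noise and the exponent diverges to $+\infty$ linearly, forcing a.s. divergence of the integral and hence $\tau_1 < \infty$ a.s. Finally I would deduce quenching of $z_t$ itself from quenching of $v_t(\phi)$: since $v_t(\phi) = \int_D e^{\kappa W_t} z_t \phi\, dx \geq e^{\kappa W_t} \inf_D z_t$ and $e^{\kappa W_t}$ is bounded away from $0$ on the (almost surely finite) interval $[0,\tau_1]$, one concludes $\liminf_{t \to \tau} \inf_D z_t = 0$ for some $\tau \leq \tau_1$. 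The principal obstacle I expect is justifying the weak/pointwise manipulations rigorously up to the stopping time $\tau$ and ensuring the Jensen comparison transfers cleanly from the SPDE level to the scalar ODE level; the rest is essentially an explicit calculation plus invocation of the law of the iterated logarithm.
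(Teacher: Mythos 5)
Your proposal is correct and follows essentially the same route as the paper's own proof: the exponential change of variables $v_t = e^{\kappa W_t} z_t$ to cancel the stochastic integral, testing against the normalized principal Robin eigenfunction to reduce to a scalar random ODE, Jensen's inequality plus a Bernoulli-comparison to obtain the explicit bound $B(t)$, and finally the law of the iterated logarithm to force a.s. divergence of the exponential functional $\int_0^\infty e^{3\kappa W_s + 3(\lambda_1 + \kappa^2/2)s}\,ds$. No meaningful differences to report.
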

  \begin{rem}\label{lpsk1}
  The result of Theorem \ref{thm2} shows that  the  impact of the noise for the dynamics of  problem \eqref{mmk1}  is vital. In particular, the presence of  the nonlinear term $f(z)={z}^{-2}$ forces the solution towards quenching almost surely. In contrast, for the corresponding deteministic problem, i.e. when $k=0,$ and for homogeneous boundary conditions then  quenching occurs only either for large initial data or for  large values of the parameter $\lambda,$ cf. \cite{EGG10, KMS08,  KS18}.
  \end{rem}

\subsection{Introducing  a  regularizing term into model \eqref{LSP1z}}\label{irt}
 A natural question arises is if can modify model \eqref{LSP1z} appropriately so its destructive quenching behaviour can be only limited in a certain range of parameters and so  of global-in-time solutions occur as well. To this end we  consider a model with a  modifiied nonlinear drift term; indeed the drift term $f(z)={z}^{-2},$  which  is responsible for the almost surely quenching (cf. Remark \eqref{lpsk1}, is now multiplied by  $e^{-3\gamma t}$ for $\gamma$ some positive constant.

 Specifically problem  \eqref{LSP1z} now is modified to
\bse\label{LSPg}
 \be\label{LSPg1}
dz_t=(\Delta z_t- \lambda  e^{-3\gamma t}z_t^{-2})dt -\kappa z_t dW_t,\quad
  x\in D, \quad t>0,
 \ee
 \be \label{LSPg2}
\frac{\partial z_t}{\partial \nu}+ \beta z_t=0,\quad x\in \partial D,  \quad t>0, \quad \beta, \kappa,\gamma >0
  \ee
  \be\label{LSPg3}
 0<z_0(x)=z(x,0)\leq 1.
 \ee
\ese


 In the sequel we proceed similarly as in the proof of Theorem \ref{thm2}, so we first set
$$z_t(\phi):= \int_D z_t \phi\, dx\quad \mbox{and}\quad  z_t^{-2}(\phi):=\int_D z^{-2}_t \phi dx$$
where $\phi$ solves \eqref{eigena}-\eqref{eigenc}  and then by second  Green's identity we obtain
\begin{eqnarray}
\Delta z_t(\phi)=
z_t(\Delta \phi),\nonumber
\end{eqnarray}
recalling that
\bgee
z_t(\Delta \phi):=\int_D z_t \Delta \phi \,dx.
\egee
Then the weak formulation of (\ref{LSPg}) is :
\begin{eqnarray}
z_t(\phi)=z_0(\phi) +\int_0^t z_s(\Delta \phi)ds
- \int_0^t \lambda e^{-3\gamma t} z_s^{-2}(\phi)\, ds -
   \kappa  \int_0^t z_s(\phi) dW_s,\;\; \mathbb{P}-\mbox{a.s.}\label{weakf2}
\end{eqnarray}
 We  again consider the stochastic process  $v_t=e^{\kappa W_t}z_t$, for $ 0\leq t<\tau$  with $\tau$ being the stopping (quenching) time of stochastic process $z_t.$  Next using  integration by parts of, see  also \eqref{mmk5} and \eqref{mmk6}, for the stochastic processes
\bgee
z_t=z_0-\kappa \int_0^t z_s dW_s+\int_0^t \left(\Delta z_s-\frac{\la e^{-3\gamma s}}{z_s^2}\right)\,ds
\egee
and for $e^{\kappa W_t}$ given by \eqref{1}
 we obtain that
 \begin{eqnarray}\label{mmk7}
v_t(\phi)=v_0(\phi)+ \int_0^t e^{\kappa W_s} dz_s(\phi)
+ \int_0^t z_s(\phi) d\left( e^{\kappa W_s} \right)
+ \left[ e^{\kappa W_s},z_s(\phi)\right](t)
\end{eqnarray}
where the quadratic variation into \eqref{mmk7} is given by \eqref{mtk1a}.

Therefore, by virtue of  \eqref{LSPg}, \eqref{weakf2} and It\^o's formula, cf. \eqref{2}, we obtain that
\begin{eqnarray}
\label{weakfv} 
v_t(\phi)=v_0(\phi)&+&\int_0^t v_s(\Delta \phi)  ds -\lambda \int_0^t e^{3\kappa W_s} e^{-3\gamma s}v_s^{-2}(\phi)ds
- \frac{\kappa ^2}{2} \int_0^t v_s(\phi)ds,
\end{eqnarray}
taking also into account that $z_t=e^{-\kappa W_t}v_t.$
 
Notably,  via \eqref{weakfv} we deduce that $v_t(x)=v(x,t)$ is a weak solution of the following random PDE
\bse\label{LSPv} 
 \be\label{LSPv1}
\frac{\partial v}{\partial t} (x,t)= \Delta v(x,t)+ \left( \gamma- \frac{\kappa ^2}{2}\right) v(x,t)
+\lambda e^{3\kappa W_t} v^{-2}(x,t),\quad
  \quad\mbox{in}\quad Q_T,
  \ee
  \be\label{LSPv2}
 \frac{\partial v(x,t)}{\partial \nu}+\beta v(x,t)=0,\quad\mbox{on}\quad \Gamma_T,
   \ee
\be\label{LSPv3}
v(x,0)=z_0(x),\quad  x \in D.
  \ee
  \ese
Problem \eqref{LSPv} should be understood   trajectorwise. and its local
 existence, uniqueness and positivity of solution up to eventual quenching time can be derived by \cite[Theorem 9, Chapter 7]{fri}.

 Recalling that  $\phi$ solves  the eigenvalue problem \eqref{eigena}-\eqref{eigenc} then  equation \eqref{weakfv} is reduced to
  \begin{eqnarray} \label{36}
  v_t(\phi)=v_0(\phi)- (\lambda_1  +\frac{\kappa ^2}{2}) \int_0^t v_s(\phi) ds.
  -\lambda \int_0^t e^{-3(\gamma s-\kappa W_s)}\,v_s^{-2} (\phi)ds,
  \end{eqnarray}
 or (cf. subsection \ref{bp})  in differential form
   \begin{eqnarray}
   \frac{d v_t(\phi)}{dt}= - \left( \lambda_1 +\frac{\kappa ^2}{2} \right) v_t(\phi) -
   \lambda e^{-3(\gamma t-\kappa W_t)}  v_t^{-2}(\phi).
   \nonumber
    \end{eqnarray}
   Next  by  virtue of  Jensen's inequality we deduce
     \begin{eqnarray}
  \frac{d v_t(\phi)}{dt} \leq - \left( \lambda_1 + \frac{\kappa ^2}{2} \right)  v_t(\phi)- \lambda e^{-3(\gamma t-\kappa W_t)}  (v_t(\phi))^{-2},
 \quad v_0(\phi)>0.
 \nonumber
    \end{eqnarray}
     By comparison we get  $v_t(\phi)\leq \Psi(t)$ where $\Psi(t)$ satisfies the following Bernoulli differential equation
   \begin{eqnarray}
  \Psi'(t)=- \left( \lambda_1 + \frac{\kappa ^2}{2} \right)   \Psi(t)- \lambda e^{-3(\gamma t-\kappa W_t)}  \Psi^{-2} (t),
 \quad \Psi_0= \Psi(0)=v_0(\phi)>0,
\nonumber
    \end{eqnarray}
  with solution
   \begin{eqnarray}
    \Psi(t)=e^{ -  \left( \lambda_1+\frac{\kappa ^2}{2} \right)t}
     \left[ \Psi_0^3 -
    3\lambda \int_0^t e^{ 3 \left( \lambda_1- \gamma+\frac{\kappa ^2}{2} \right)s+3 \kappa W_s} ds \right]^{\frac{1}{3}}, \quad 0\leq t < \tau,
    \nonumber
    \end{eqnarray}
      with
     $$\tau_2:= \inf \left\lbrace t\geq 0 : \int_0^t e^{ 3 \left( \lambda_1- \gamma+
     \frac{\kappa ^2}{2} \right)s+3 \kappa W_s} ds\geq \frac{1}{3\lambda}\Psi_0^3 \right\rbrace,$$
   being the stopping time of $\Psi(t).$

 It follows that $\Psi(t)$ extincts to zero in finite time on the event $\left\lbrace \tau_2 < +\infty \right\rbrace$.
  Since $v_t(\phi)\leq \Psi(t)$ then $\tau_2$ is an upper bound for the stopping (extinction) time $\tau$ of $v_t(\phi),$ which is also the stopping (quenching) times of $v_t$ and $z_t.$ 

More specifically we have
 \begin{eqnarray}\label{mtk1}
 \Pbb\left[\tau_2=+\infty \right]&=&\Pbb\left[\int_0^t e^{3\kappa W_s+3 \left(\lambda_1- \gamma + \frac{\kappa ^2}{2}\right)s}ds < \frac{1}{3\lambda}\Psi^3_0 , \quad \mbox{for all} \quad t>0 \right]\nonumber\\
  &=&\Pbb\left[\int_0^{+\infty}  e^{3\kappa W_s+3 \left(\lambda_1- \gamma
  + \frac{\kappa ^2}{2}\right)s}ds \leqq \frac{1}{3\lambda}\Psi^3_0 \right].
     \end{eqnarray}
Then via the change of variables $s_1\mapsto \frac{ 9 \kappa^2 s}{4}$  and making use of the scaling property of  $W_t$ we obtain
\bge\label{mtk69}
 \Pbb\left[\tau_2=+\infty \right]= \Pbb\left[ \frac{4}{9 \kappa ^2 } \int_0^{+\infty}  e^{2 W_{ s_1}+\frac{4}{3 \kappa^2}\left(\lambda_1- \gamma
  + \frac{\kappa ^2}{2}\right)s_1}ds_1 \leqq \frac{1}{3\lambda}\Psi^3_0 \right].
\ege
Setting  $W^{(\mu)}_s:=W_s+\mu s$,  with
 $\mu :=\frac{2}{3 \kappa^2}\left(\lambda_1- \gamma
  + \frac{\kappa ^2}{2}\right)$
 then \eqref{mtk69} reads
 \begin{eqnarray}\label{mtk71}
  \Pbb\left[\tau_2=+\infty \right]= \Pbb\left[ \frac{4}{9 \kappa ^2 }\int_0^ {+\infty} e^ {2 W^{(\mu)}_s} ds \leqq \frac{1}{3\lambda}\Psi^3_0 \right].
\end{eqnarray}
    We now distinguish two cases:
    \begin{enumerate}
    \item
    We first take $\gamma\geq \lambda_1+\frac{\kappa ^2}{2}$
    and  thus we have
  \begin{eqnarray}\int_0^{\infty} e^{2 W_s^{(\hat{\mu})}} ds= \frac{1}{2 Z_{-\hat{\mu}}},
  \nonumber
    \end{eqnarray}
 cf. see \cite[ Chapter 6, Corollary 1.2]{YOR},   where $Z_{-\mu}$ is a random variable with law $\Gamma(-\mu),$ i.e.
  $$\Pbb\left(Z_{-\mu} \in dy\right)=\frac{1}{\Gamma(-\mu)} e^{-y} y^{-\mu-1}\,dy,$$
 where $\Gamma(\cdot)$ is the complete gamma function, cf. \cite{AS}.

Hence \eqref{mtk71} entails (see also  in \cite[formula 1.104(1) page 264]{BS02})
\bge \label{LSPgPropG}
     \Pbb\left[ \tau_2 = +\infty \right]=\int_0^{ \frac{1}{3\lambda}v^{3}_0(\phi) } \frac{\left( \frac{9\kappa ^2 y}{2}\right ) ^{- \frac{\left( \kappa ^2- 2\gamma +2 \lambda_1\right)}{3 \kappa ^2}}}{y \Gamma \left( - \frac{ \left( \kappa ^2- 2\gamma +2 \lambda_1\right)} {3\kappa ^2 } \right)}  \exp \left(- \frac{2}{9 \kappa ^2 y} \right)\, dy,
\ege
hence
\bgee
 \Pbb\left[ \tau_2 <+\infty \right]=1- \Pbb\left[ \tau_2 = +\infty \right]=\int_{ \frac{1}{3\lambda}v^{3}_0(\phi) }^{+\infty} \frac{\left( \frac{9\kappa ^2 y}{2}\right ) ^{- \frac{\left( \kappa ^2- 2\gamma +2 \lambda_1\right)}{3 \kappa ^2}}}{y \Gamma \left( - \frac{ \left( \kappa ^2- 2\gamma +2 \lambda_1\right)} {3\kappa ^2 } \right)}  \exp \left(- \frac{2}{9 \kappa ^2 y} \right)\, dy.
\egee
Now since $\tau<\tau_2$ we have that
\bge\label{LSPgProp}
 \Pbb\left[ \tau <+\infty \right]\geq \int_{ \frac{1}{3\lambda}v^{3}_0(\phi) }^{+\infty} \frac{\left( \frac{9\kappa ^2 y}{2}\right ) ^{- \frac{\left( \kappa ^2- 2\gamma +2 \lambda_1\right)}{3 \kappa ^2}}}{y \Gamma \left( - \frac{ \left( \kappa ^2- 2\gamma +2 \lambda_1\right)} {3\kappa ^2 } \right)}  \exp \left(- \frac{2}{9 \kappa ^2 y} \right)\, dy.
 \ege
\item  Next we assume that $\mu>0, $  i.e.   $\gamma< \lambda_1+\frac{\kappa ^2}{2}$.
 Then using the  law of the iterated logarithm, cf.  \eqref{psk4} and \eqref{psk5}, for $W_t$, we obtain
 \bgee
\int_0^{+\infty}  e^{3\kappa W_s+3 \left(\lambda_1- \gamma
  + \frac{\kappa ^2}{2}\right)s}ds=+\infty,
 \egee
   hence via \eqref{mtk1} we derive
   \begin{eqnarray}
\mathbb{P} \left[\tau_2=+ \infty \right]= \mathbb{P} \left[ \int_0 ^{+\infty} e^{3\kappa W_s+3(\lambda_1+\frac{\kappa ^2}{2})s}ds \leq \frac{1}{3\lambda} \Psi_0 ^3 \right]=0
  \nonumber
    \end{eqnarray}
and thus
 \begin{eqnarray}
 \Pbb \left[ \tau_2<+ \infty \right]=1-\mathbb{P} \left[\tau_2=+ \infty \right]=1.
  \nonumber
    \end{eqnarray}
\end{enumerate}
Summarizing  the above we have the following result

\begin{thm} \label{thm3a} 
\begin{enumerate}
\item  If   $\gamma\geq \lambda_1+ \frac{\kappa ^2}{2}$  then the weak  solution of problem  (\ref{LSPg})  quenches in finite time with
probability bounded below as shown in \eqref{LSPgProp}.
\item In the complementary case when
$\gamma<\lambda_1+ \frac{\kappa ^2}{2}$  then the weak  solution of problem  (\ref{LSPg})  quenches in finite time almost surely.
\end{enumerate}
\end{thm}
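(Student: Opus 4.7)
The plan is to combine the already-established transformation $v_t = e^{\kappa W_t} z_t$ and the Bernoulli comparison with a precise analysis of the perpetual exponential functional of Brownian motion with drift, whose limiting behaviour switches from a.s.\ divergence to a genuine distributional law exactly at the threshold $\gamma = \lambda_1 + \kappa^2/2$.

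First I would record that the auxiliary process $v_t(\phi) = \int_D e^{\kappa W_t} z_t \phi\,dx$, tested against the principal Robin eigenfunction $\phi$ of \eqref{eigena}--\eqref{eigenc}, satisfies the trajectorywise ODE inequality
$$ \frac{d v_t(\phi)}{dt} \leq -\left(\lambda_1 + \tfrac{\kappa^2}{2}\right) v_t(\phi) - \lambda\, e^{-3(\gamma t - \kappa W_t)}\,(v_t(\phi))^{-2}, $$
obtained by applying Jensen's inequality (for the convex $r(s)=s^{-2}$) to the identity \eqref{36}. Comparison with the explicit solution $\Psi(t)$ of the associated Bernoulli ODE gives an upper bound $v_t(\phi) \leq \Psi(t)$; since $v_0(\phi)=z_0(\phi)>0$ and $e^{\kappa W_t}$ is bounded away from zero on any finite interval, finiteness of the stopping time $\tau_2$ of $\Psi$ forces $v_t(\phi)$, and hence $z_t(\phi) \geq \inf_{x\in D} |z_t(x)|$ via the normalization \eqref{eigenc}, to vanish in finite time. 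Consequently $\tau \leq \tau_2$ almost surely, and the theorem reduces to controlling $\mathbb{P}[\tau_2 = +\infty]$.

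The next step is to analyse this probability via the exponential functional embedded in the definition of $\tau_2$. Rescaling $s \mapsto \tfrac{9\kappa^2}{4} s$ and using Brownian scaling reduces the integral to $\tfrac{4}{9\kappa^2}\int_0^{+\infty} e^{2 W_s^{(\mu)}}\,ds$ with drift $\mu := \tfrac{2}{3\kappa^2}(\lambda_1 - \gamma + \kappa^2/2)$, and the sign of $\mu$ produces the dichotomy. For case~(ii), $\gamma < \lambda_1 + \kappa^2/2$ gives $\mu > 0$; by the law of the iterated logarithm \eqref{psk4}--\eqref{psk5} the drifted Brownian motion $W_s^{(\mu)}\to +\infty$ a.s., so the integral diverges and $\mathbb{P}[\tau_2 < +\infty]=1$, which transfers to $\tau$. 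For case~(i), $\gamma \geq \lambda_1 + \kappa^2/2$ gives $\mu \leq 0$, and the Dufresne--Yor identity \cite[Chapter~6, Corollary~1.2]{YOR} identifies $\int_0^{+\infty} e^{2 W_s^{(\mu)}}\,ds$ in law with $1/(2Z_{-\mu})$, where $Z_{-\mu} \sim \Gamma(-\mu)$; a straightforward change of variables in this density yields the explicit integral in \eqref{LSPgProp}, and the inclusion $\{\tau_2 < +\infty\}\subseteq \{\tau < +\infty\}$ propagates the lower bound to $\mathbb{P}[\tau<+\infty]$.

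The main obstacle is the invocation of Dufresne's theorem in the non-positive drift regime: one must check that $-\mu = \tfrac{2}{3\kappa^2}(\gamma - \lambda_1 - \kappa^2/2) \geq 0$ lies in the range where the identity applies (strictly positive for a genuine $\Gamma$-law, with $\mu=0$ handled as a limiting case where the functional is a.s.\ infinite, already subsumed in case (ii) at the boundary) and then to track the constants carefully through the time rescaling in order to recover the precise form of \eqref{LSPgProp}. Once this step is in place the rest is routine, as the quenching of $z_t$ from that of $v_t(\phi)$, and of $v_t(\phi)$ from that of $\Psi$, is already established in subsection~\ref{bp}.
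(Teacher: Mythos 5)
Your proposal is correct and follows essentially the same route as the paper: the same change of variables $v_t = e^{\kappa W_t} z_t$, the same weak formulation tested against the principal Robin eigenfunction, the same Jensen--Bernoulli comparison giving $v_t(\phi)\leq\Psi(t)$, the same time-rescaling to reduce to $\int_0^\infty e^{2W_s^{(\mu)}}ds$ with $\mu=\tfrac{2}{3\kappa^2}(\lambda_1-\gamma+\kappa^2/2)$, and then Dufresne--Yor in the $\mu\leq 0$ regime versus the iterated-logarithm divergence for $\mu>0$. Your remark about the boundary case $\mu=0$ is a useful clarification the paper leaves implicit: the Dufresne identity strictly speaking requires $-\mu>0$, and at $\mu=0$ one can either observe that $\Gamma(0)=+\infty$ forces the right-hand side of \eqref{LSPgPropG} to zero, or simply fold the equality case into part (ii), either of which keeps the statement consistent.
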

\begin{rem}\label{aek1}
Let us fix $\gamma$ and $\kappa$ so that $\gamma- \frac{\kappa ^2}{2}>0.$ Then Theorem \ref{thm3a}(ii) entails that  quenching behaviour dominates when $\la_1$  is  big which only occurs when the domain $D$ is rather small.
\end{rem}

In Figure \ref{FigP_2} an upper bound of the probability of  global existence, 
 provided by \eqref{LSPgPropG},  is displayed  with respect to the parameter $\lambda$ in Figure  \ref{FigP_2}(a) and with respect to the parameter $a$ in Figure  \ref{FigP_2}(b). In that case an initial condition of  the form  $z_0(x)=1-ax(1-x)$ is considered. Specifically, in Figure  \ref{FigP_2}(a) we observe a decrease of the probability of global existence, as $\lambda$ increases. Similarly in Figure \ref{FigP_2}(b) again reducing the minimum of the initial condition  results
 in decreasing the probability of global existence and this becomes more intence as $\lambda$ increases.
\begin{figure}[!htb]\vspace{-5cm}\hspace{-2.5cm}
   \begin{minipage}{0.6\textwidth}
     \centering
     \includegraphics[width=1.2\linewidth]{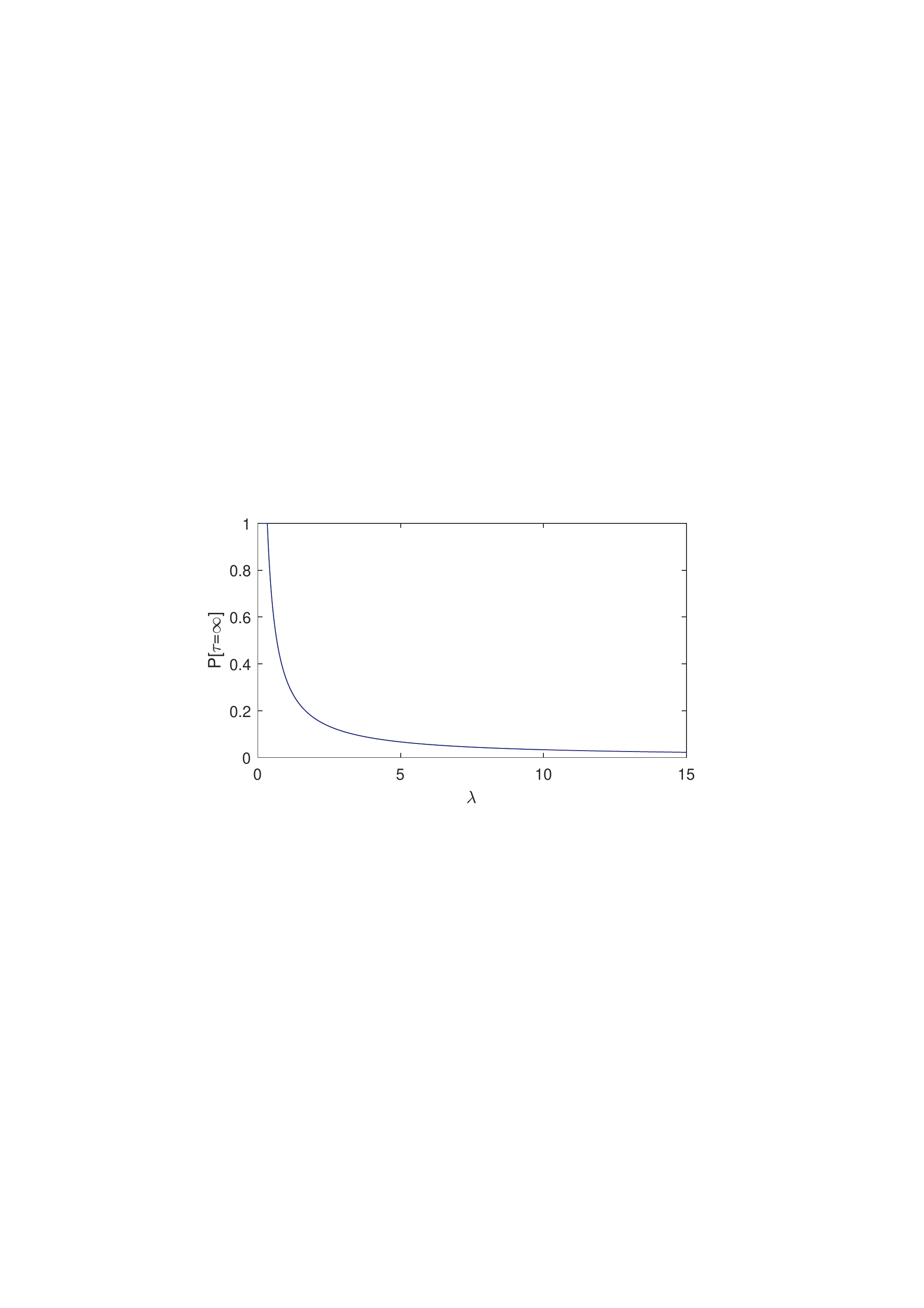}
       \put(-170,300){(a)}
   \end{minipage}\hspace{-2cm}
   \begin{minipage}{0.6\textwidth}
     \centering
     \includegraphics[width=1.2\linewidth]{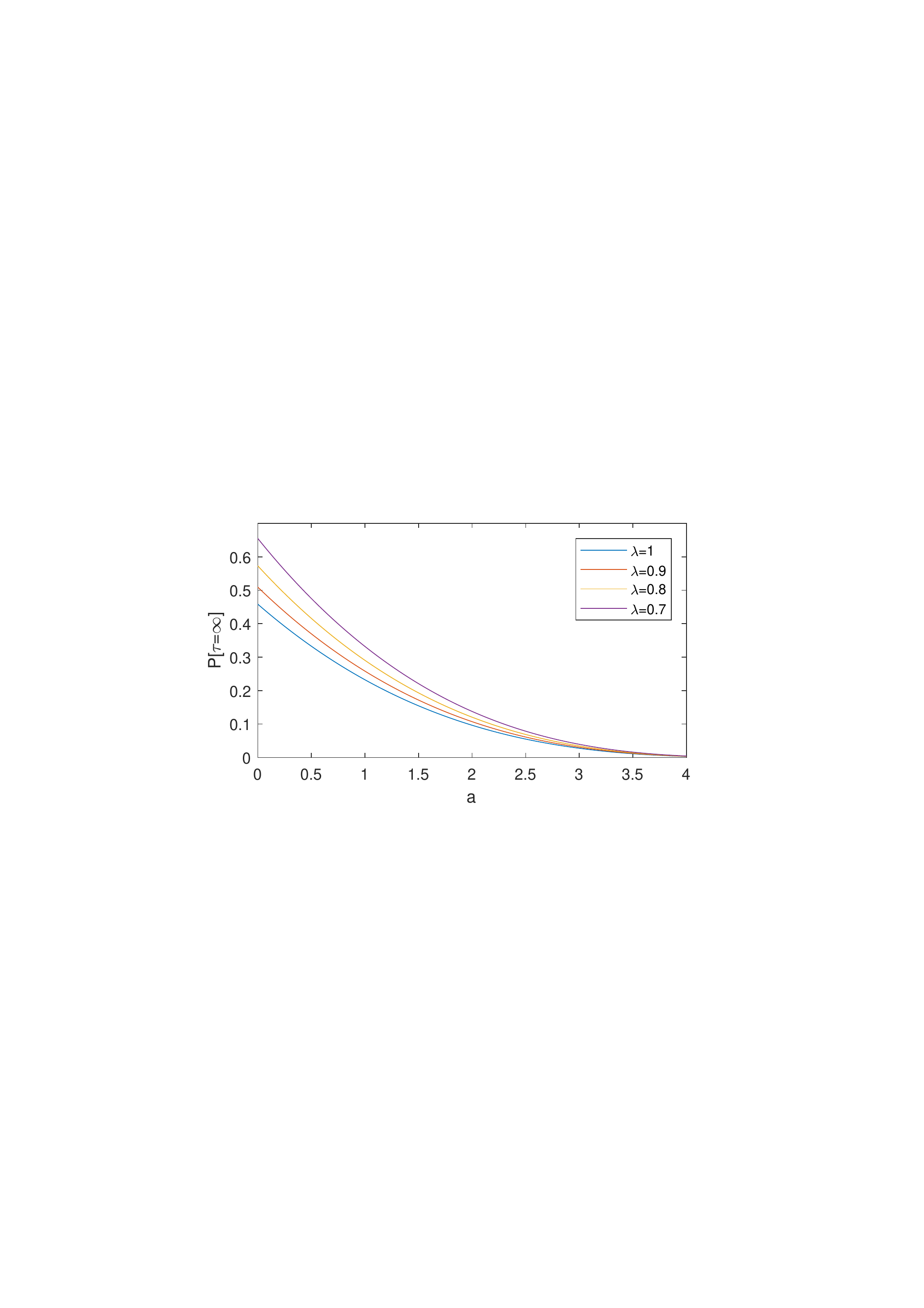}
       \put(-170,300){(b)}
   \end{minipage} \vspace{-6cm}
   \caption{(a) Diagram of the probability $\mathbb{P}\left[\tau =+\infty \right]$ with respect to the parameter $\lambda $,
   (b) with respect to the parameter $a$ in the initial condition for various values of the parameter $\lambda$.}\label{FigP_2}
\end{figure}
Besides, in Figure \ref{FigP_2a}  the behaviour of the probability of 
the global existence, bounded above by the quantity defined in \eqref{LSPgProp},  is examined with respect to the parameter $\gamma$ and the noise amplitude $\kappa.$ 
In particular, the impact of  parameter $\gamma,$ i.e. the coefficient of the regularizing term,   is displayed  in  Figure \ref{FigP_2a}(a). Note that the condition $\gamma>\lambda_1+\kappa^2$ should be satisfied (here $\lambda_1=\pi^2$  and 
$\kappa=1$);  then  we observe a peak of the probability at the value $\gamma = 13.77.$
  Moreover in Figure \ref{FigP_2a}(b)  the variation of that  probability with respect  to the parameter $\kappa$ for various values of the parameter $\lambda$ is shown. In that case a similar peak is attained at the value 
  $\kappa=1.084.$
  
\begin{figure}[!htb]\vspace{-5cm}\hspace{-2.5cm}
   \begin{minipage}{0.6\textwidth}
     \centering
     \includegraphics[width=1.2\linewidth]{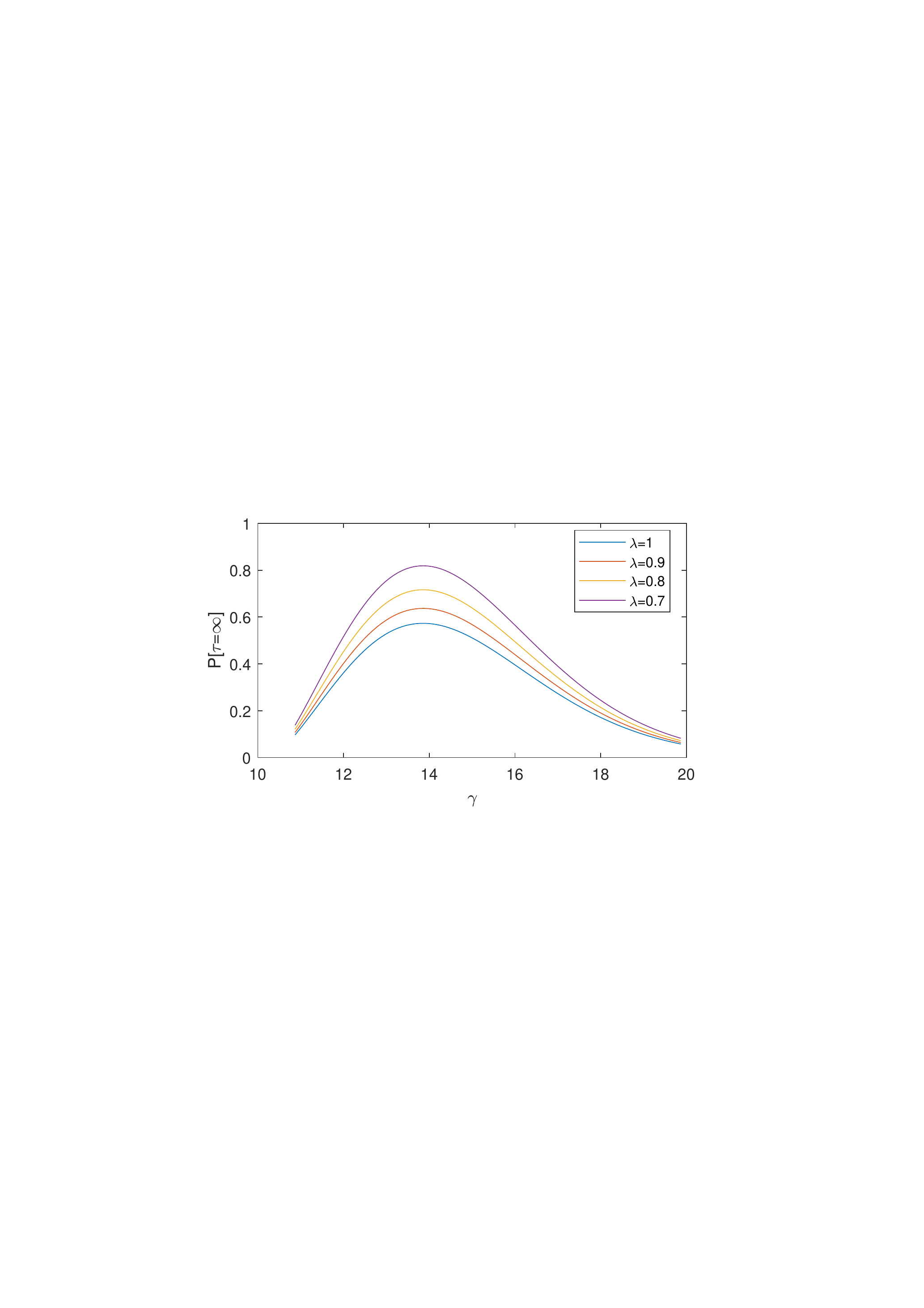}
       \put(-170,300){(a)}
   \end{minipage}\hspace{-2cm}
   \begin{minipage}{0.6\textwidth}
     \centering
     \includegraphics[width=1.2\linewidth]{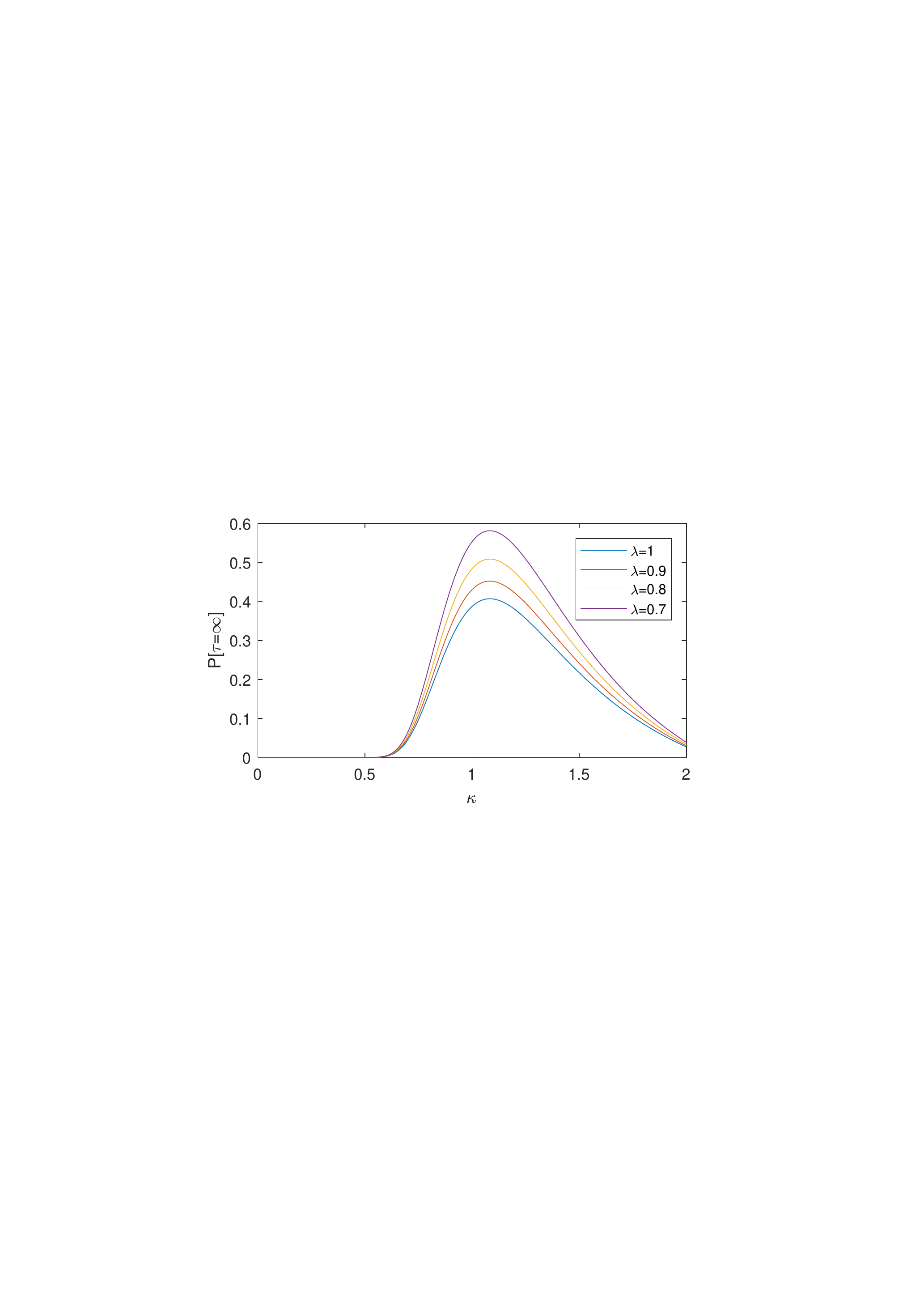}
       \put(-170,300){(b)}
   \end{minipage} \vspace{-6cm}
   \caption{ Diagram of the probability  $\mathbb{P}\left[\tau =+\infty \right]$ for various values of the parameter $\lambda$, (a) with respect to the parameter $\gamma $,
  (b) with respect to the  noise amplitude $\kappa$  }
    \label{FigP_2a}
\end{figure}
\subsection{ {Model \eqref{GSPzu}}}
 In the current subsection we investigate the probability of quenching for the solution of problem \eqref{GSPzu}
    where  $g, \kappa_1: \mathbb{R}_+\rightarrow \mathbb{R}_+$  and
   $h: D\times \mathbb{R}_+ \rightarrow \mathbb{R}_+$ are continuous funtions. Note  that  from mathematical modelling perspective the function $g(t)$ represents the dispersion coefficient  whilst  $h(x, t)$ describes the varying dielectric properties of the elastic membrane (\cite{ FMPS07}), cf. section \ref{cmm}.

Next we define the random  process
\[
M_t:=\int_0^t \kappa_1(s)dW_s,
\]
and we set
\begin{equation}\label{57}
v_t:= e^{M_t} z_t,\quad 0\leq t<\tau,
\end{equation}
where again $\tau$ is the stopping (quenching) time of stochastic process $z_t$ determined by \eqref{GSPzu}.

In the sequel we  proceed as in \cite{AJN}. It\^o's formula implies the semimartingale expansion
\bge\label{mmk71}
e^{M_t}=1+\int_0^t \kappa_1(s) e^{M_s} dW_s + \frac{1}{2} \int_0^t \kappa_1^2(s) e^{M_s} ds .
\ege
Next by letting $z_t(\phi):= \int_D z_t \phi dx$ and $z_t^{-2}(\phi):=\int_D z^{-2}_t \phi dx,$ where again $\phi \in C^2(D)$ solves the eigenvalue problem \eqref{eigena}-\eqref{eigenc},  we have
\bge
z_t(\phi)= z_0(\phi)+  \int_0^t g(s) \Delta z_s(\phi) ds 
- \lambda\int_0^t h(x,s) z_s^{-2}(\phi) ds -\int_0^t \kappa_1(s) z_s(\phi)dW_s,\label{58}
\ege
$\mathbb{P} - a.s.$ for all $ t \in [0, \tau).$ 

Note also that for  any fixed $\phi$, the process $(z_t(\phi)1_{[0,\tau)}(t))_{t\in \mathbb{R}_+}$ is also a semimartingale. Moreover  using  integration by parts formula,  cf.  \eqref{mmk5} and \eqref{mmk6},  we get the weak formulation
\begin{eqnarray}
v_t(\phi) &=& e^{M_t} z_t(\phi) \nonumber \\
&=&e^{M_0}z_0(\phi)+ \int_0^t e^{M_s} d(z_s(\phi))+\int_0^t z_s(\phi) d(e^{M_s})
+ \left[e^{M_t},z_t(\phi)\right]\,\label{59}
\end{eqnarray}
where the quadratic variation (see  \cite[ section 7.6, pg. 113]{1}) is given by
$$
\left[e^{M_t}, z_t(\phi)\right](t):=- \int_0^t \kappa^2 _1 (s) e^{M_s}z_s(\phi) ds.
$$
 Next  \eqref{59} in conjunction with \eqref{57}, \eqref{mmk71} and  \eqref{58} yields
\begin{eqnarray}
v_t(\phi) &=&  z_0(\phi) +
 \int_0^t e^{M_s} \left( g(s) \Delta z_s(\phi)  -\lambda z_s^{-2}(h\phi) \right) ds  \nonumber \\
&&\quad\quad \quad + \int_0^t e^{M_s} \kappa_1 (s) z_s(\phi) dW_s
- \int_0^t e^{M_s} \kappa_1 (s) z_s(\phi) dW_s  \nonumber \\
&& \quad\quad \quad + \frac{1}{2} \int_0^t \kappa_1 ^2 (s) e^{M_s} z_s(\phi) ds  - \int_0^t  \kappa^ 2_1(s) e^{M_s} z_s(\phi) ds \nonumber\\
&=& v_0(\phi) +  \int_0^t g(s) v_s(\Delta \phi) ds
 - \lambda\int_0^t  e^{3M_s} v_s^{-2}(h\phi) ds-\frac{1}{2} \int_0^t \kappa_1 ^2(s) v_s(\phi) ds\nonumber\\
&=& v_0(\phi) -\lambda_1\int_0^t g(s) v_s(\phi) ds-\lambda\int_0^t  e^{3M_s} v_s^{-2}(h\phi) ds
-\frac{1}{2} \int_0^t \kappa_1 ^2(s) v_s(\phi) ds ,\label{60}
\end{eqnarray}

where
\bgee
v_s^{-2}(h\phi) :=\int_D v_s^{-2}(x) h(x,t) \phi(x)\,dx,
\egee
taking also into account that  $z_0(\phi)= v_0(\phi)$  due to  (\ref{57})  as well as that  $\Delta v_s (\phi)=v_s(\Delta \phi)=-\lambda_1 v_s(\phi)$ via Green's second identity.

 Equation \eqref{60} can then be written in differential form as 
\bgee
\frac{d v_t(\phi)}{dt}=-\left(\lambda_1 g(t)+\frac{1}{2}\kappa_1^2(t)\right) v_t(\phi)-\lambda e^{3M_t} v_t^{-2}(h\phi),
\egee
 which by virtue of  Jensen's inequality  infers
\bgee
\frac{d v_t(\phi)}{dt}\leq-\left(\lambda_1 g(t)+\frac{1}{2}\kappa_1^2(t)\right)v_t(\phi)-\lambda \omega e^{3M_t}(v_t(\phi))^{-2},
\egee
where  $\omega:=\max_{(x,s)\in \bar{D}\times [0,\tau]} h(x,s)>0$
 and by means of a comparison argument
 we get
\begin{equation}
v_t(\phi) \leq A(t), \quad 0\leq t<\tau , \label{67}
\end{equation}
where now $A(t)$ denotes the solution of the initial value problem
\begin{eqnarray}
\label{68}
A'(t)=-\left(\lambda_1 g(t)+\frac{1}{2}\kappa_1^2(t)\right) A(t)
-\lambda \omega e^{3M_t} A^{-2}(t) , \; 0<t<\tau,
\quad
A_0=A(0)= v_0(\phi)>0,
 \nonumber
\end{eqnarray}
 with solution
   \begin{equation}
  \label{70}
  A(t) =e^{-\left( \lambda_1 K(t) +\frac{1}{2} J(t)\right)}
  \left[ v_0(\phi) ^3- 3 \lambda\omega \int_0^t e^{3M_s +3( \lambda_1 K(s) + \frac{1}{2}J(s))} ds \right]^{\frac{1}{3}},
  \end{equation}
  where  $K(t):= \int_0^t g(s) ds$ and $J(t) : = \int_0^ t \kappa_1^2 (s) ds.$

  The maximum existence (stopping) time $\tau_3$ of $A(t)$ is then given by
  \[ \tau_3:=\left\lbrace t\geq 0  : \int_0^t e^{3M_s + 3(\lambda_1 K(s) + \frac{1}{2}J(s))} ds \geq  \frac{1}{3\lambda\omega} v^3_0(\phi) \right \rbrace,
  \]
and actually  $A(t)$ quenches  in finite time on the event $\left\lbrace \tau_3 < + \infty\right\rbrace.$
The fact  that $0\leq v_t(\phi)\leq A(t)$ reveals that $\tau_3$ is an upper bound of the stopping (extinction) time $\tau$ for $v_t(\phi),$ hence the function
\[
t\mapsto\int_D e^{M_t} z_t(x)\phi(x)\,dx
\]
quenches in finite time under the event  $\left\lbrace \tau_3< + \infty\right\rbrace.$  Using now \eqref{eigenc} as well as the fact that $t\mapsto e^{M_t}$  is bounded below  away from zero (cf. \eqref{psk4}, \eqref{psk5} and the fact that $\kappa_1(t)$ is bounded ) on $[0,\tau_3],$ once $\tau_3<\infty,$  then we deduce that the function 
$t\mapsto \inf_D z_t $ cannot stay away from zero on  $[0,\tau_3]$ for $\tau_3<\infty.$ Therefore, $z_t$ quenches in finite time on the event $\left\lbrace \tau_3 < + \infty\right\rbrace$ and so $\tau_3$  is an upper bound for the quenching time of $z_t$.

Observe that $M_t= \int_0^t \kappa_1 (s) dW_s$ is a continuous martingale and so it can be written as a time - changed Brownian motion $M_t= W_{J(t)},$ where $J(t)=[M](t)= \int_0^t \kappa^2 _1 (s) ds $
 is the quadratic variation of $M,$ cf. \cite[Theorem 4.6 page 174]{KS91} and \cite{AJN}.

 Set  $\rho:=\frac{1}{3 \lambda\omega} v^3_0(\phi)$ then
 \begin{eqnarray}
   \mathbb{P}(\tau_3=+ \infty) &=&
   \mathbb{P} \left(  \int_0^t e^{3M_s + 3(\lambda_1 K(s)+ \frac{1}{2}J(s))} ds
   < \frac{1}{3 \lambda\omega} v^3_0(\phi),
    \quad \mbox{for all} \quad t>0 \right)  
     \nonumber\\
  & =& \mathbb{P}\left( \int_0^{+\infty} e^ {3 W_{J(s)} + 3(\lambda_1 K(s)+ \frac{1}{2}J(s)) } ds
  \leq \rho\right)
   \nonumber \\
   &=&  \mathbb{P} \left( \int_0^{+\infty} \frac{1}{\kappa_1 ^2 (J^{-1} (s_1))} e^{3 W_{s_1}+3 \left( \lambda_1 K(J^{-1}(s_1))+\frac{1}{2} s_1 \right) } ds_1
  \leq \rho \right) \label{Probgt}
  \end{eqnarray}
  where $s_1:=J(s).$

  At that point we introduce the assumption that coefficients $g(t)$ and $\kappa_1(t)$ satisfy:  there exists some positive constant $C$  such that
  \begin{equation}
  \label{assum71}
  \frac{1}{\kappa_1 ^2 (t)} e^{ 3\lambda_1\left( K(t) +\frac{1}{2}J(t) \right) } \geq C\quad\mbox{for any}\quad t\geq 0.
  \end{equation}

 Then \eqref{Probgt} via  \eqref{assum71} reads
 \bge \label{assum71a}
 \mathbb{P}(\tau_3=+ \infty)   &\leq&  \mathbb{P} \left( \int_0^{\infty}  e^{3 W_{s_1}+\left(-\frac{3}{2} \lambda_1  J(J^{-1}(s_1) + \frac{3}{2}  s_1\right) } ds_1 \leq  \frac{\rho}{ C} \right)\nonumber\\
 &=& \mathbb{P} \left( \int_0^{\infty}  e^{3 W_{s_1}+\frac{3}{2}\left( 1 -\lambda_1\right) s_1 } ds_1 \leq  \frac{\rho}{ C} \right).
 \ege
  Next  we introduce the change of variables  $s_2\mapsto \left(\frac{3}{2} \right) ^2 s_1 ,$  and thus  again via
  the scaling property of  $W_t$ then  \eqref{assum71a} entails
\begin{eqnarray}
  \mathbb{P}\left( \tau_3= + \infty \right)
  &\leq & \mathbb{P}\left( \frac{4}{9} \int_0^{\infty}
   e^{3 W_{\frac{4}{9} s_2}+\frac{3}{2}\left(1-\lambda_1\right) \frac{4}{9}s_2} ds_2 \leq \frac{\rho}{ C} \right)
  \nonumber \\ 
&=&  \mathbb{P} \left( \int_0^{\infty}  e^{2\left(\frac{1-\lambda_1}{3}\right)s_2 + 2 W_{s_2} } ds_2
  \leq \frac{9 \rho}{4  C} \right)\nonumber\\
  &=&\mathbb{P}\left(\int_0^{+\infty} e^{2W_s^{(\mu)}} ds \leq \frac{9 \rho}{4 C} \right),
   \label{Probgts}
  \end{eqnarray}
 where $\mu:= \frac{1-\lambda_1}{3}$ and $W_s^{(\mu)}:=W_s+\mu s.$

 Next we distinguish the following  cases: 
 \begin{enumerate}
 \item Initially we assume  that $\mu<0,$  i.e. $\lambda_1>1.$ Then by virtue of \eqref{Probgts} and following the same reasoning as in subsection \ref{irt} we obtain
 \begin{equation} \label{LSPgtProp} 
  \mathbb{P}( \tau_3= +\infty) \leq \mathbb{P} \left( \frac{1}{2 Z_{-\mu}}\leq \frac{9 \rho}{4C}\right)=\frac{1}{\Gamma(-\mu)} \int_0 ^{\frac{2C}{9 \rho}} y^{-\mu - 1} e^{-y} dy,
  \end{equation}
  cf. \cite[Corollary 1.2 page 95]{YOR}.

 Hence, from (\ref{LSPgtProp}) we derive
   \begin{equation} \label{80}
   \mathbb{P} \left( \tau_3< + \infty \right) =1 - \mathbb{P}( \tau_1= + \infty ) \geq 1-\frac{1}{\Gamma(-\mu)} \int_0 ^{\frac{2C}{9 \rho}} y^{-\mu - 1} e^{-y} dy=\frac{1}{\Gamma(-\mu)} \int_{\frac{2C}{9 \rho}}^{\infty} y^{-\mu - 1} e^{-y} dy.
   \end{equation}
\item In the complimentary case $\mu\geq 0,$  i.e.  when $\lambda_1\leq 1,$ then via the  iterated logarithm law  for $W_s,$ cf. \eqref{psk4} and \eqref{psk5}, we obtain
\[
\int_0^{+\infty} e^{2W_s^{(\mu)}} ds=+\infty
\]
and thus
  $$
   \mathbb{P}[\tau= +\infty]=\mathbb{P}\left(\int_0^{+\infty} e^{2W_s^{(\mu)}} ds \leq \frac{9 \rho}{4 C} \right)=0.
   $$
  The latter implies that
  $$
   \mathbb{P}[ \tau < + \infty] = 1 - \mathbb{P}[\tau= +\infty]=1
   $$
and so in that case  $A(t)$ quenches a.s. independently of the initial condition $v_0$ and the parameter 
$\lambda$, which also entails  that  $v_t$ and  $z_t$ quench as well.
\end{enumerate}
   \begin{thm} \label{thm3}
Assume that condition  \eqref{assum71} holds true for the continuous positive functions  $g(t), \kappa_1(t) >0.$ Then:
\begin{enumerate}
   \item if $\lambda_1>1$  the probability of  quenching of the weak  solution of problem \eqref{GSPzu}  is lower bounded as shown in  \eqref{LSPgtProp},
\item whilst  for  $\lambda_1 \leq 1$  then the weak  solution of problem \eqref{GSPzu} quenches in finite time $\tau<\infty$ almost surely.
\end{enumerate}
\end{thm}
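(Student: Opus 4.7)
The plan is to mimic the reduction already used in subsections \ref{bp} and \ref{irt}, but now with the time-dependent coefficients $g(t), \kappa_1(t), h(x,t)$, so that the quenching question is translated into the behaviour of an exponential functional of a time-changed Brownian motion. First I would introduce the auxiliary process $v_t = e^{M_t}z_t$ with $M_t=\int_0^t \kappa_1(s)\,dW_s$, apply It\^o's formula to $e^{M_t}$, and combine it with the weak formulation of \eqref{GSPzu} against the principal eigenfunction $\phi$ of $-\Delta$ with the Robin boundary condition \eqref{eigena}--\eqref{eigenc} through the integration by parts formula \eqref{mmk5}. The martingale part then cancels, leaving the deterministic-looking identity \eqref{60}, which in differential form reads $\frac{d v_t(\phi)}{dt} = -\bigl(\lambda_1 g(t)+\tfrac{1}{2}\kappa_1^2(t)\bigr)v_t(\phi) - \lambda e^{3M_t} v_t^{-2}(h\phi)$.

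Next I would use Jensen's inequality applied to the convex map $s\mapsto s^{-2}$ together with the normalization $\int_D \phi\,dx=1$ and the bound $h(x,t)\le \omega$ to obtain the scalar differential inequality
\begin{equation*}
\frac{d v_t(\phi)}{dt} \leq -\Bigl(\lambda_1 g(t)+\tfrac{1}{2}\kappa_1^2(t)\Bigr)v_t(\phi) - \lambda\omega\, e^{3M_t}\bigl(v_t(\phi)\bigr)^{-2}.
\end{equation*}
A standard comparison principle then yields $v_t(\phi)\le A(t)$, where $A(t)$ solves the Bernoulli-type ODE \eqref{68} and admits the closed form \eqref{70}. Consequently the stopping time $\tau_3$ of $A$, which is an upper bound for the quenching time of $z_t$, satisfies $\{\tau_3=+\infty\} = \bigl\{ \int_0^\infty e^{3M_s+3(\lambda_1 K(s)+\frac{1}{2}J(s))}\,ds \leq \rho\bigr\}$ with $\rho=\frac{1}{3\lambda\omega}v_0^3(\phi)$.

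The core step is then to reduce this exponential functional to one involving a drifted Brownian motion. By the Dambis--Dubins--Schwarz theorem the continuous martingale $M_t$ can be written as $M_t = W_{J(t)}$ with $J(t)=\int_0^t \kappa_1^2(s)\,ds$. Performing the change of variable $s_1=J(s)$ in the integral and invoking hypothesis \eqref{assum71} to absorb the factor $\frac{1}{\kappa_1^2(J^{-1}(s_1))}e^{3\lambda_1(K(J^{-1}(s_1))+\frac{1}{2}s_1)}$ into the constant $C$, I obtain the bound
\begin{equation*}
\mathbb{P}[\tau_3=+\infty]\le \mathbb{P}\Bigl(\int_0^\infty e^{2W_s^{(\mu)}}\,ds \le \tfrac{9\rho}{4C}\Bigr),
\end{equation*}
after the additional rescaling $s_2=(3/2)^2 s_1$ and using Brownian scaling, where $\mu=(1-\lambda_1)/3$ and $W_s^{(\mu)}=W_s+\mu s$. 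This is exactly the form studied in subsection \ref{irt}.

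It remains to dichotomize on the sign of $\mu$. In case (i), $\lambda_1>1$ so $\mu<0$, and Dufresne's identity (\cite[Ch.~6, Cor.~1.2]{YOR}) gives $\int_0^\infty e^{2W_s^{(\mu)}}\,ds = \frac{1}{2Z_{-\mu}}$ with $Z_{-\mu}\sim \Gamma(-\mu)$, producing the explicit lower bound \eqref{LSPgtProp} on $\mathbb{P}[\tau_3<+\infty]$; since $\tau\le \tau_3$, the same bound holds for the quenching time of $z_t$. In case (ii), $\lambda_1\le 1$ so $\mu\ge 0$, and the law of the iterated logarithm \eqref{psk4}--\eqref{psk5} forces $\int_0^\infty e^{2W_s^{(\mu)}}\,ds=+\infty$ almost surely, hence $\mathbb{P}[\tau_3<+\infty]=1$ and thus almost sure quenching of $z_t$. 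The main technical obstacle is the careful justification of the time change together with condition \eqref{assum71}, since one must ensure the rescaled integrand can be controlled from below by a constant uniformly in $t$ to legitimize the passage from $\int_0^\infty \frac{1}{\kappa_1^2}e^{\cdots}\,ds$ to the standard Dufresne integral; every other step is comparison and an application of well-known distributional identities for exponential functionals of Brownian motion.
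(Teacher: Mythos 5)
Your proposal is correct and reproduces essentially the same proof as the paper: the auxiliary process $v_t = e^{M_t} z_t$, the cancellation of the martingale part via integration by parts and Green's identity against the Robin eigenfunction, Jensen's inequality and comparison with the Bernoulli ODE for $A(t)$, the Dambis--Dubins--Schwarz time change $M_t = W_{J(t)}$ combined with hypothesis \eqref{assum71}, the rescaling $s_2 = (3/2)^2 s_1$ to reach the exponential functional of $W^{(\mu)}_s$ with $\mu = (1-\lambda_1)/3$, and the final dichotomy via Dufresne's identity (for $\mu<0$) versus the law of the iterated logarithm (for $\mu\geq 0$). The only cosmetic difference is that the paper performs the absorption of the prefactor into $C$ and the rescaling in a slightly different algebraic order, but the substance and the conclusion are identical.
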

\begin{rem}
Note that in the special case $g(t)=1,\kappa_1(t)=\kappa=$constant and $h(x,t)=1$ then via relation \eqref{Probgt} we recover the result of Theorem \ref{thm2}.
\end{rem}

\begin{rem}
Remarkably Theorem \ref{thm3} (ii) implies that when the diffusion coefficient $g(t)$  is large, enough ensured by condition  \eqref{assum71},  then quenching behaviour dominates for the case of a  big domain $D.$ This looks in the counterintutive to what has been pointed out in Remark \ref{aek1} in the first place, however it  is in full agreement with the phenomenon  observed in \cite{KBM20} where a strong reaction coefficient,  enhanced there by the evolution of  underlying  domain, fights against the development of a singularity. 
\end{rem}

\begin{rem}
Note that  since $K(t)$ and $J(t)$ are increasing functions we have
\bgee
e^{3\lambda_1\left( K(t)+\frac{1 }{2}J(t)\right)}\geq e^{3\lambda_1\left( K(0)+\frac{1 }{2}J(0)\right)}=1,
\egee
and thus condition  \eqref{assum71} holds true provided that $\kappa_1(t)$ is bounded above, i.e. $\sup_{(0,\infty)}\kappa_1(t)=L<\infty.$ In that case we have that  $C=\frac{1}{L^2}.$

Alternatively,  if $\kappa_1(t)$  gets  unbounded as $t\to \infty$ but satisfies the growth condition
\bgee
\frac{d\kappa^2_1(t)}{dt}\leq \beta \kappa^2_1(t), \quad t>0,\quad\mbox{for some}\quad \beta>0,
\egee
then by virtue of L' H\^opital's rule we can show that
\bgee
\lim_{t\to\infty}\frac{e^{J(t)}}{\kappa_1^2(t)}=\infty
\egee
and then using again the monotonicity of $K(t)$ we derive  \eqref{assum71} with $C=1.$
    \end{rem}

In relation to  applications it is of particular interest to simulate the stochastic process describing the operaton of MEMS device and so to investigate  under which circumstances it quenches. For that purpose in the following section we present such a numerical algorithm  together with various related  simulations for problem \eqref{LSP}.

\section{Numerical Solution} \label{na}


\subsection{Finite Elements approximation}
In the current section we present a numerical study of problem (\ref{LSP})  in the one-dimensional   case. 
For that purpose we apply a finite element semi - implicit Euler in time  scheme, cf. \cite{Lord}. 
The  considered noise term is a multiplicative one and of the form $\sigma(u)\,dW_t$ for
 $\sigma(u)=\kappa (1-u)$   with $\kappa>0$. 
We also assume homogeneous Dirichlet boundary conditions at the points $x=0,1,$   although some of the presented numerical experiments also concern homogeneous and nonhomogeneous Robin boundary conditions.
 A homogeneous Dirichlet boundary condition $u(0,t)=u(1,t)=0$  corresponds in having $z=1$ at those points. Remarkably, this is a case is not actually  covered by the  analysis in  section \ref{eqp}.

We apply
a discretization in $[0,T]\times [0,1]$, $0\leq t\leq T$, $0\leq x\leq 1$ with
$t_n=n\delta t$, $\delta t=\left[{T}/{N}\right]$  for $N$ the number of time steps and we also
 introduce the grid points in $[0,1]$, $x_j = j\delta x$, for $\delta x = 1/M$ and $j = 0,1,\ldots, M$.

 Then we proceed with a finite element approximation for problem (\ref{LSP}).
Let $\Phi_j$, $j=1,\ldots, M-1,$ denote the standard linear $B-$ splines on the interval $[0,\,1]$
\begin{eqnarray}
\Phi_j=\left\{\begin{array}{ccc}
 \frac{y-y_{j-1}}{\delta y},\quad y_{j-1}\leq y \leq y_j, \\
 \frac{y_{j+1}\,-y}{\delta y},\quad y_{j}\leq y \leq y_{j+1}, \\
 0,\quad \mbox{elsewhere in}\quad [0,\,1],
\end{array} \right.
\end{eqnarray}
for $j=1,2,\ldots,M-1$. We then set 
$u(x,t)=\sum_{j=1}^{M-1} {a}_{u_j}(t) \Phi_j(x)$, 
$t\geq 0$, $0\leq x\leq 1$.

Substituting the later expression for $u$  into  equation (\ref{LSP1})
 and applying the standard
Galerkin method, i.e. multiplying with $\Phi_i$, for $i=1,2,\ldots, M-1$ and integrating over
$[0,\,1]$, we obtain a system of equations for the ${a_{u_j} }$'s as follows
\begin{eqnarray}\hspace{-.1cm}\label{eqfe}
   \sum_{j=1}^{M-1}  {\dot{a}}_{u_j}  (t)<\Phi_j(x),\,\Phi_i(x)> &= &
 -\sum_{j=1}^{M-1} {a}_{x_j}(t)\left<\Phi'_j(x),\,\Phi'_i(y)\right> \nonumber\\
  &&+  \left<F\left(\sum_{j=1}^{M-1} {a}_{u_j}(t) \Phi_j(x)\right),\,\Phi_i(x)\right>,\quad\quad
 \nonumber\\
 &&+  \left<\sigma \left(\sum_{j=1}^{M-1} {a}_{u_j}(t) \Phi_j(x)\right) dW(x,t),\,\Phi_i(x)\right>,\quad\quad
\end{eqnarray}
where $<f,g>:=\int_0^1 f(x)g(x)dx$ and $i=1,2,\ldots,M-1$, and in our case 
$F(s)=\frac{\lambda}{(1-s)^2}$, $\sigma(s)=\kappa\,(1-s)$.

 Setting $a_u=[a_{u_1},\,a_{u_2},\ldots,a_{u_{M-1}}]^T$
    the system of equations for the ${a_u}$'s take the form
    \begin{eqnarray}
A {\dot{a}_u}(t)&=&-B a_u(t) +b(t) + b_s(t),\nonumber
\end{eqnarray}
 for 
 \begin{eqnarray*}
 b(u)=\left\{\left<F \left(\sum_{j=1}^{M-1} {a}_{u_j}(t) \Phi_j(x)\right),\,\Phi_i(x)\right>\right\}_i,\\ 
 b_s(u,\,\Delta W_t)=
 \left\{\left<\sigma \left(\sum_{j=1}^{M-1} {a}_{u_j}(t) \Phi_j(x)\right) \Delta W(x,t),\,\Phi_i(x)\right>\right\}_i,
 \end{eqnarray*} 
  the latter  coming from the corresponding  It\^o integral, and 
  $d W_t\simeq \Delta W_h(x,t)=W_h(t+\delta t,x)-W_h(t,x)$ for $W_h(t)$ the finite sum giving  the discrete approximation of $W(t)$.
  
   More specifically the approximation $W_h$ should have the form  $W_h(t):=\sum_{j=1}^{M-1} \sqrt{q_j}\chi_j \beta_j(t)$.
Additionally in order to obtain the same sample path $W(t)$ with different time steps we use the reference time step $\delta t_{r}=T/(m N)$, $m\in \mathbb{N}^+$. 
The increments over intervals of size $\delta t=m \delta t_{r}$   are given by
\[W_h(t+\delta t)-W_h(t)=\sum_{n=0}^{m-1}  W_h(t+t_{n+1})-W_h(t+t_n).  \]

Moreover we approximate the space-time white noise by taking
   \[W_h(t^{n+1})-W_h(t^{n}) =\sqrt{\delta t_r}\sum_{j=1}^{ M-1}\sqrt{q_j}\chi_j\xi_j^n,\]
  where $\xi_j^n:=( \beta_j(t_{n+1})-\beta_j(t_{n}))/\sqrt{\delta t_r}$ and  $\xi_j^n\sim N(0,1)$ are i.i.d. random variables
   for i.i.d.  Brownian motions $\beta_j(t)$.
   Also the eigenfunctions $\chi_j=\chi_j(x)=\sqrt{2}\sin\left( j\pi x\right)$,  $j\in\mathbb{N}^{+}$  are taken as a basis of $L^2(0,1)$ and
   $q_j$ are chosen to be
   \begin{eqnarray}
   q_j=\left\{
   \begin{array}{cc}
   l^{-(2r+1+\epsilon)} & j=2l+1, \, j=2l,\\
   0         \quad \quad     \quad \quad               & j=1,
   \end{array}\right.
   \end{eqnarray}
   for $l\in \mathbb{N}$, $r$ being the regularity parameter, $0\ll \epsilon <1$ to obtain an 
   $H_0^r(0,1)$-valued process.


    We then apply a semi-implicit Euler method in time by taking
\[ A{\dot{a}_u(t_{n})}\simeq A\left({a_u^{n+1}-a_u^{n}}\right)/({\delta t})=
 -B a_u^{n+1} + b(u^{n})+b_s(u^n)\]    
   or 
   \[\left( A+\delta t B\right)a_u^{n+1} =a_u^{n}+\delta t\, b(u^{n})+\delta t \,b_s(u^n,\,\Delta W_h^n)\]
   with  the${(M-1)\times(M-1)}$ matrices $A,B$  having the form
 {\small
\begin{eqnarray}\hspace{-1.4cm}
&& A=\delta x \left[\begin{array}{ccccc}
 \frac23 & \frac16 & 0 & \ldots & 0 \\
  \frac16 & \frac23 & \frac16 & \ldots & 0\\
  0 & 0 & \ddots & \ddots & 0\\
  0 & 0 & \ldots &\frac16 & \frac13
\end{array} \right], \,
B=\frac{1}{\delta y} \left[\begin{array}{ccccc}
 2 & -1 & 0 & \ldots & 0 \\
  -1 & 2 & -1 & \ldots & 0\\
  0 & 0 & \ddots & \ddots & 0\\
  0 & 0 & \ldots &-1 &  2
\end{array} \right], \,         \nonumber      \\
&& b^n=b(u^n)=\left\{\left<F \left(\sum_{j=0}^{M-1} {a}_{u_j}^n \Phi_j(x)\right),\,\Phi_i(x)\right>\right\}_i,
 \nonumber      \\
&& b_s^n= b_s(u^n,\,\Delta W_h^n)=
 \left\{\left<\sigma \left(\sum_{j=0}^M {a}_{u_j}^n\Phi_j(x)\right) \Delta W_h^n,\,\Phi_i(x)\right>\right\}_i \nonumber,
\end{eqnarray}
} 
for ${a}_{u_j}^n={a}_{u_j}(t_n)$, $i=1\ldots, M-1$.
 
 Finally the corresponding algebraic system for the $a_u^n$'s after some manipulation becomes
 \begin{eqnarray}\label{numscha1}
 a_u^{n+1}=\left(A+\delta t B \right)^{-1}\left[a_u^{n} + \delta t\, b^n+ \delta t\, b_s^n\right],
\end{eqnarray}
 for $a_u^1$  being determined by the initial condition.  
    

\subsection{Simulations} 
 
 Initially we present a realization of the numerical solution of problem \eqref{LSP} in Figure \ref{Fig_sim1}(a) for 
 $\lambda=1$, $\kappa=1$,    initial condition $u(x,0)=c\,x(1-x)$ for $c=0.1$ and homogeneous Dirichlet boundary conditions ($\beta\to\infty$, $\beta_c=0$). By this performed realization the occurrence of  quenching is evident.
 For a different realization but for the same parameters in Figure \ref{Fig_sim1}(b) the maximum of the solution at each time step is plotted and again a similar quenching behaviour is observed.
\begin{figure}[!htb]\vspace{-4cm}\hspace{-1cm}
   \begin{minipage}{0.48\textwidth}
     \centering
     \includegraphics[width=1.2\linewidth]{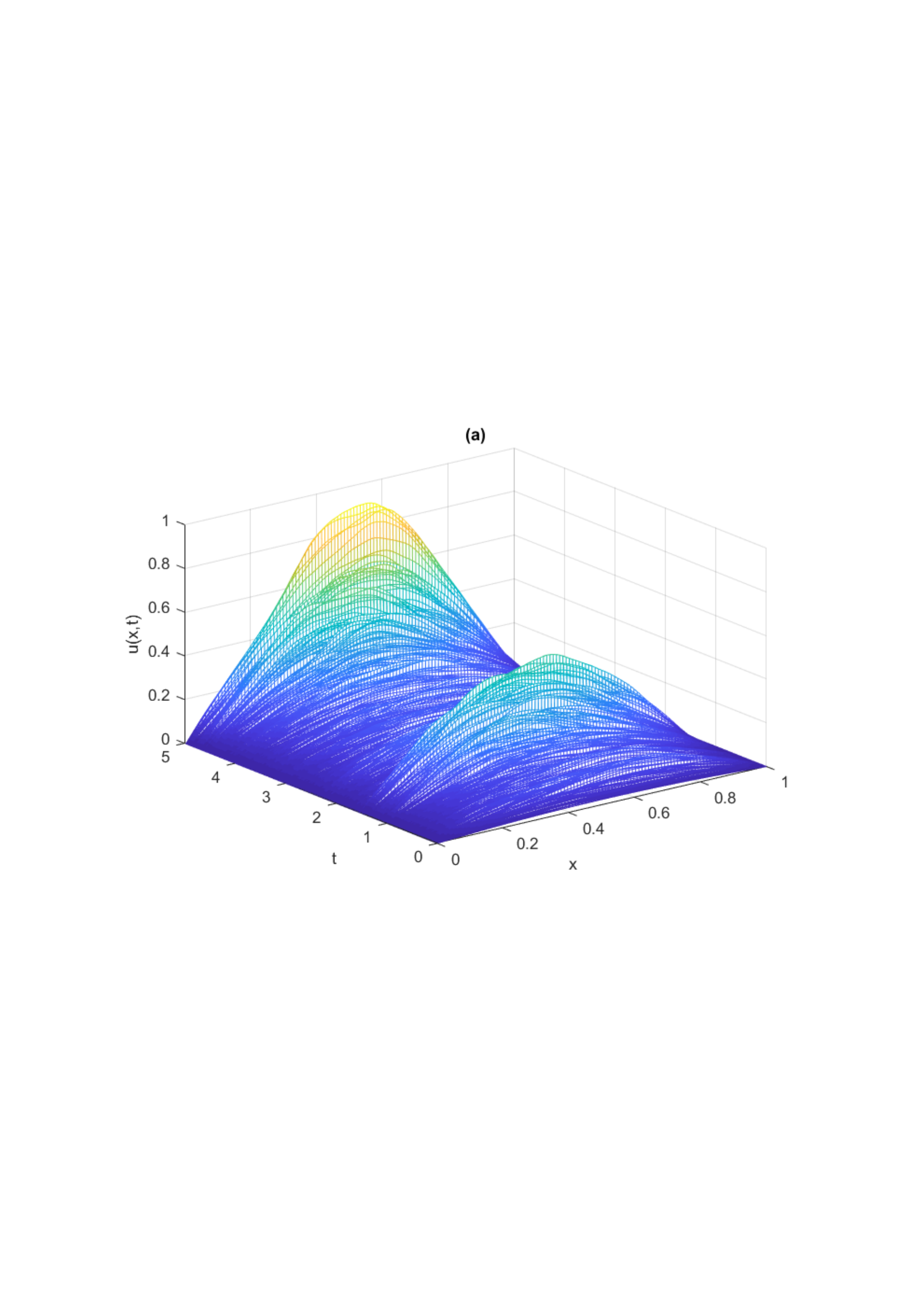}
   \end{minipage}\hfill
   \begin{minipage}{0.48\textwidth}\hspace{-2cm}
     \centering
     \includegraphics[width=1.2\linewidth]{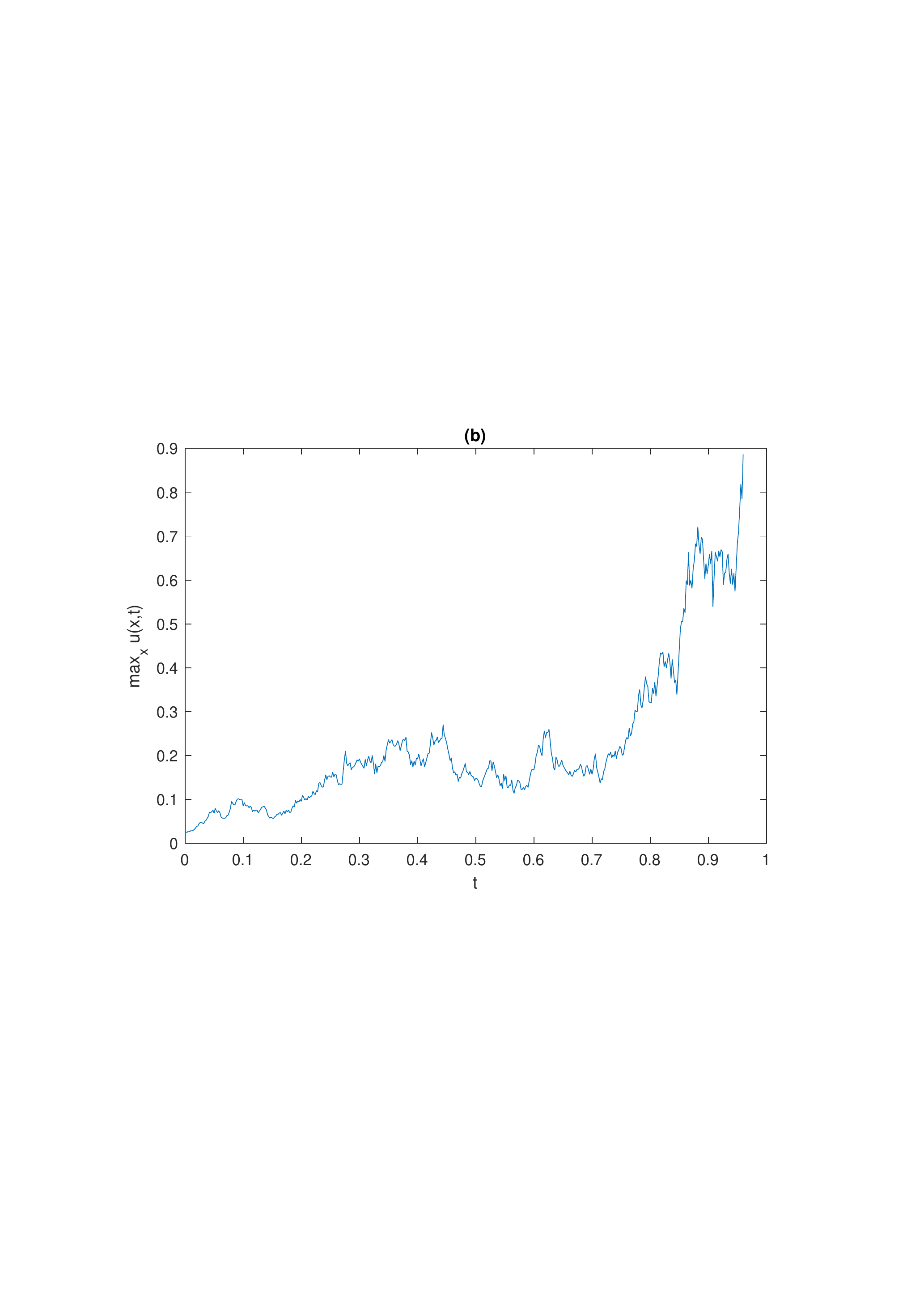}
   \end{minipage}\vspace{-4cm}
   \caption{(a) Realisation of the numerical solution of problem \eqref{LSP} for $\lambda=1$, $k=1$, $M=102$, $N=10e4$,  $r=0.1$  and initial condition $u(x,0)=c\,x(1-x)$ for $c=0.1$.
   (b) Plot of $\|u(\cdot,t) \|_\infty$ from a different realization but with the same   parameters values}.\label{Fig_sim1}
\end{figure}
Next  in Figure \ref{Fig_sim2}(a) we observe the quenching behaviour of five realisations of the numerical solution of problem \eqref{LSP} for $\lambda=2$. In an extra realization depicted in Figure \ref{Fig_sim2}(b)  the spatial distribution of the numerical solution at different time instants can be seen.
\begin{figure}[!htb]\vspace{-4cm}\hspace{-.5cm}
   \begin{minipage}{0.48\textwidth}
     \centering
     \includegraphics[width=1.2\linewidth]{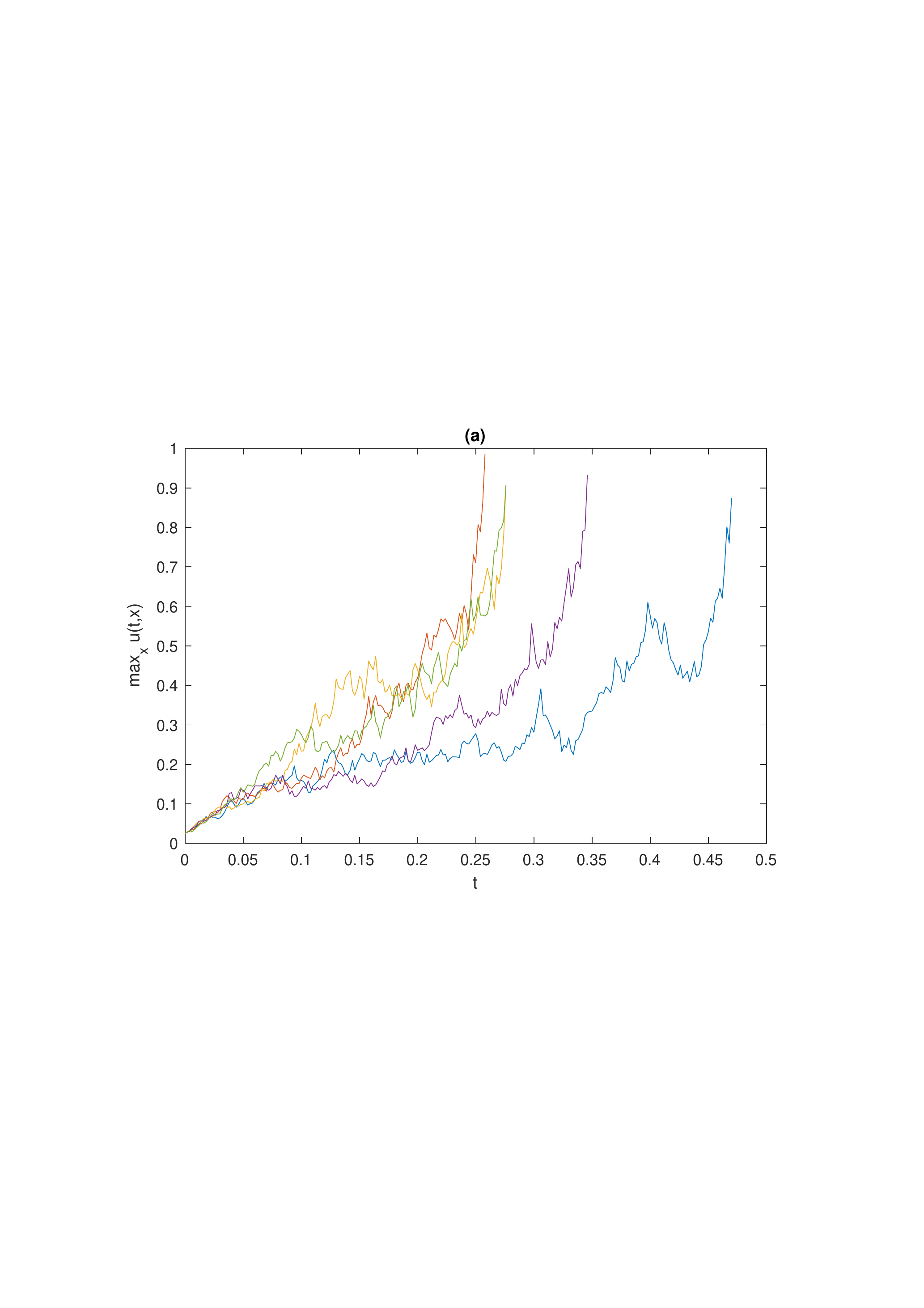}
   \end{minipage}\hfill
   \begin{minipage}{0.48\textwidth}\hspace{-2cm}
     \centering
     \includegraphics[width=1.2\linewidth]{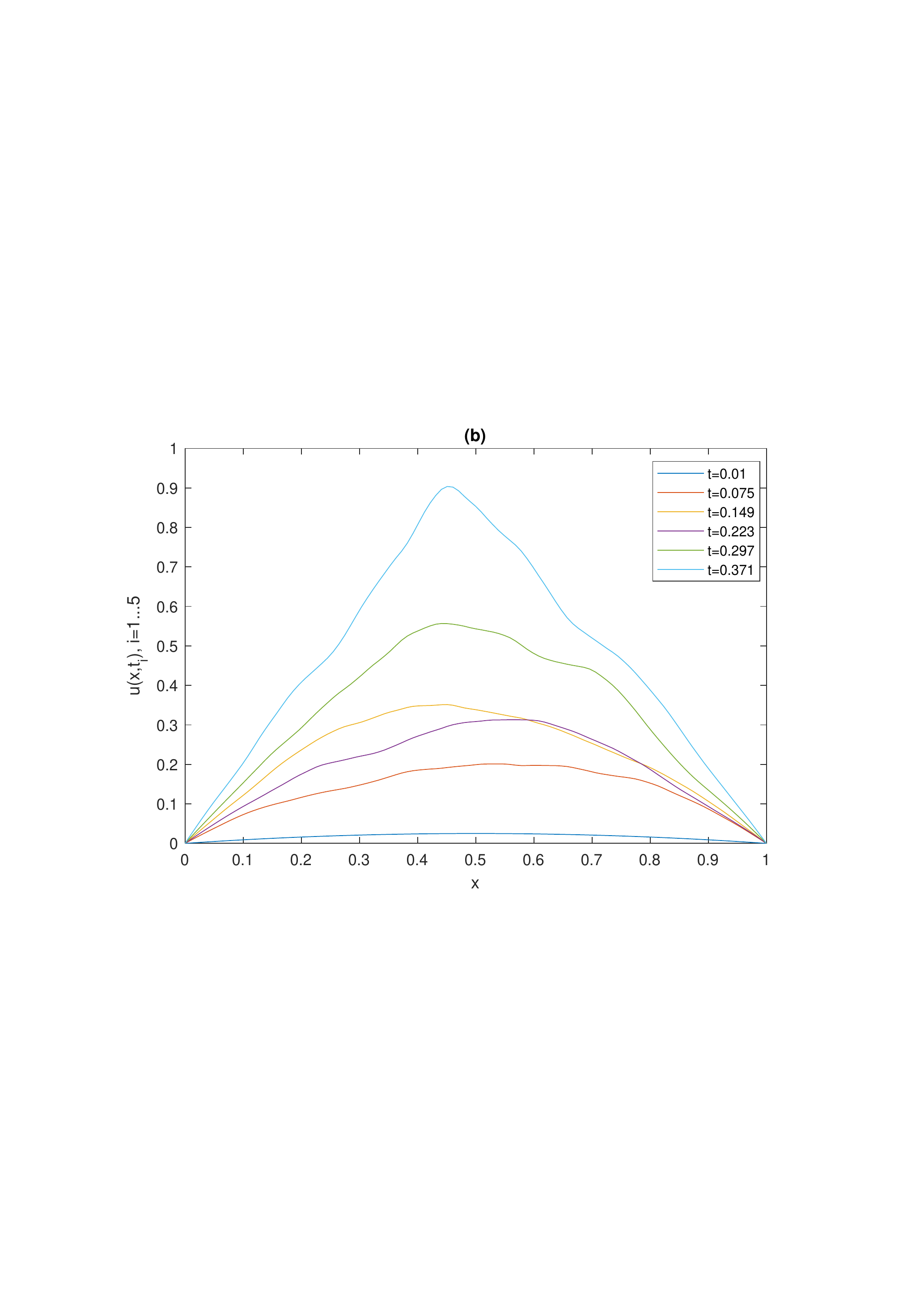}
   \end{minipage}\vspace{-4cm}
   \caption{(a) Realisation of the $\|u(\cdot,t) \|_\infty$ of the numerical solution of problem \eqref{LSP} for $\lambda=2$, $k=1$, $M=102$, $N=10e4$,  $r=0.1$  and initial condition $u(x,0)=c\,x(1-x)$ for $c=0.1$.
   (b) Plot of $u(x,t_i) $ from a different realization with the same  parameters values at five time instants.}\label{Fig_sim2}
\end{figure}

An interesting direction worth investigating is the derivation of  estimates  of the probability of quenching in a specific 
time interval $[0, T]$ for some $T>0$.
It is known, cf. \cite{Kavallaris2016},  that for  imposed Dirichlet boundary conditions, then the  solution $u$ will eventually quench in some finite time $T_q$ for large enough values of the parameter $\lambda$ or big enough initial data.

From the application  point of view an estimate of the probability that $T_q<T$ would be useful with respect to various values of the parameter $\lambda$.

   In Table (T1) the results of such a numerical experiment are presented. In particular, implementing  $N_R$ realizations then in the first column we print out  the values of $\lambda$  considered, 
  while the second column contains
     the number of times that the solution quenched before the time $T,$  whilst in the last two columns the mean $m(T_q)$ and the variance $Var (T_q)$ of the quenching time respectively are given.
    The rest of the parameters were taken to be the same as in the previous simulations but with $\kappa=0.1$.
  
  By the results in Table (T1) we observe that in a finite time interval the stochastic problem performs a dynamic behaviour which resembles  that of the  deterministic one. Specifically,  increasing the value of $\lambda$ initially we have no quenching in this time interval while after $\lambda>\lambda^*_{T}>1$ we have quenching almost surely at a time  $T_q$ with mean and variance decreasing with $\lambda$.  
 \bigskip
  \begin{center}\label{Table1}
{\bf Table (T1)}\\Realizations of the numerical solution of problem \eqref{LSP} \\
for $N_R=1000$ in the time interval $[0,10].$ \\
\bigskip
 \begin{tabular}{||c||c|c|c||}\hline\label{T1}
    $\lambda $      &        Quenching times      & $m(T_q)$  &  $\sigma^2 (T_q) $\\
      \hline
  0.5   & 0 & - &  -\\
 1 &   0 & - & - \\
1.5 &  1000 &  1.4642 & 0.0071 \\
2  &  1000 & 0.3542 &  3.7852e-05 \\
2.5   & 1000 & 0.2184 &  4.2468e-06\\
 \hline
\end{tabular}
\end{center}
Additionally,  we  perform  another experiment for simulation time $T=1$  and $\lambda=1.65,$  chosen in a $\la-$range  where the occurrence of quenching is not definite, 
 and for  a larger number of realizations $N_R=10^4$,  whilst 
  the rest of the parameter values being kept the same as in Table ($T_1$). Then
  we obtain a numerical estimation for the probability of quenching equal to $ 0.3464$ with $m(Tq)=0.3380$ and $Var(Tq)=0.2157.$

 Next  we consider the case of nonhomogeneous boundary conditions of the form \eqref{LSP2} or equivalently \eqref{LSP2za} with $\beta=\beta_c,$ since such a case is of  particular interest in the light of the quenching results of section \ref{eqp}.
A simulation implementing  the previously  described numerical algorithm for this particular case is presented in Figure \ref{Fig_sim1R}(a). The  presented  realization is for problem \eqref{LSP} and the  parameters used here are $\lambda=0.3$, $k=1$, $\beta=\beta_c=1.$  Also, in  Figure \ref{Fig_sim1R}(b) the quenching of $||u(\cdot,t)||_{\infty}$ for one realization is depicted.

\begin{figure}[!htb]\vspace{-4cm}\hspace{-1cm}
   \begin{minipage}{0.48\textwidth}
     \centering
     \includegraphics[width=1.2\linewidth]{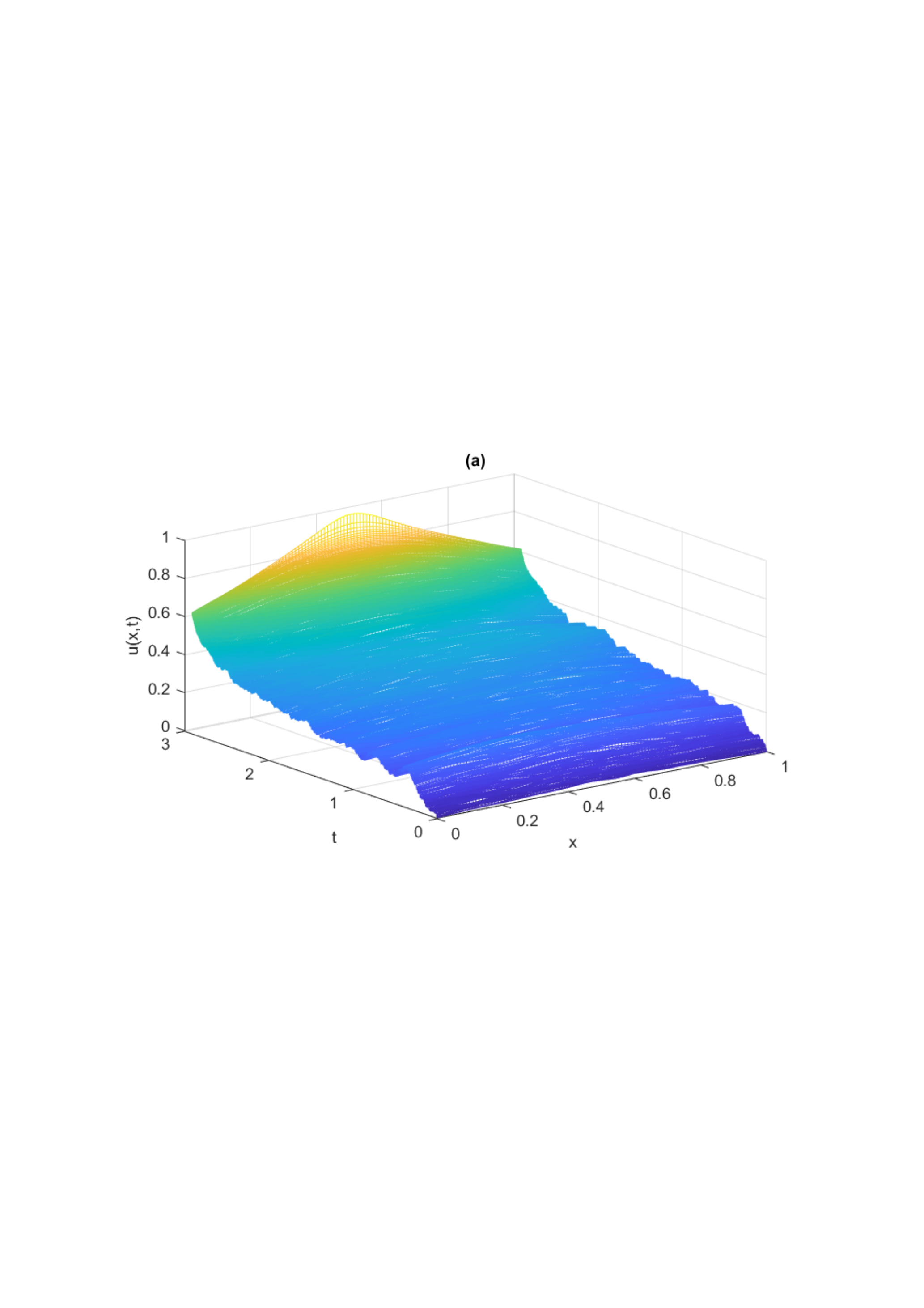}
   \end{minipage}\hfill
   \begin{minipage}{0.48\textwidth}\hspace{-2cm}
     \centering
     \includegraphics[width=1.2\linewidth]{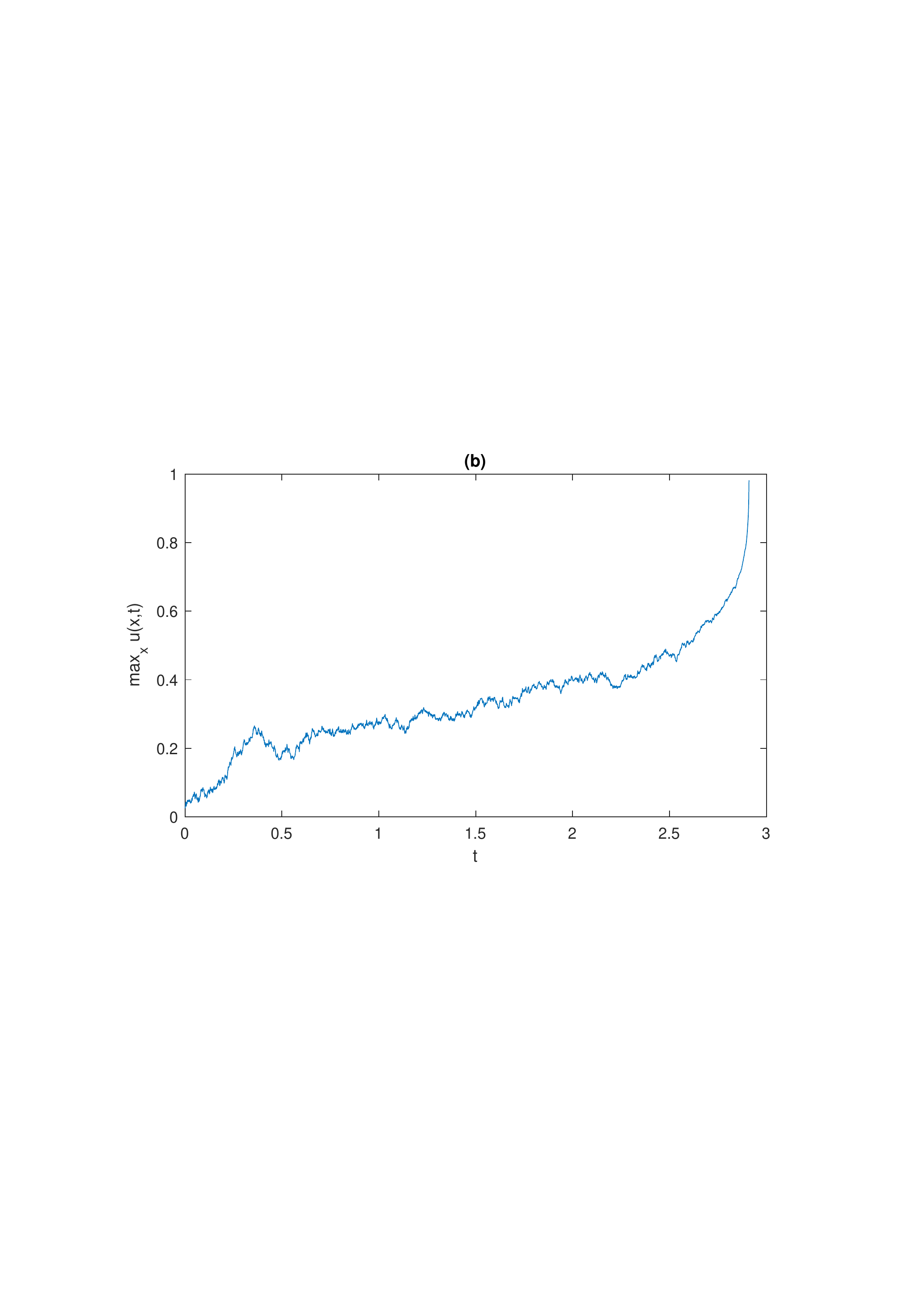}
   \end{minipage}\vspace{-4cm}
   \caption{(a) Realisation of the numerical solution of problem \eqref{LSP} for $\lambda=0.3$,  $\kappa=1$, $M=102$, $N=10e4$,  $r=0.1$,  initial condition $u(x,0)=c\,x(1-x)$ for $c=0.1$ and with $\beta=\beta_c=1$ in the nonhomogeneous boundary condition.
   (b) Plot of $\|u(\cdot,t) \|_\infty$. The quenching behaviour is apparent.}\label{Fig_sim1R}
\end{figure}
Similarly in the next set of graphs in Figure \ref{Fig_sim2R}(a) we display the quenching behaviour of five realisations of the numerical solution of problem \eqref{LSP} for $\lambda=0.3$. In an extra realization provided by  Figure \ref{Fig_sim2R}(b)  the spatial distribution of the numerical solution at different time instants is presented.
\begin{figure}[!htb]\vspace{-4cm}\hspace{-.5cm}
   \begin{minipage}{0.48\textwidth}
     \centering
     \includegraphics[width=1.2\linewidth]{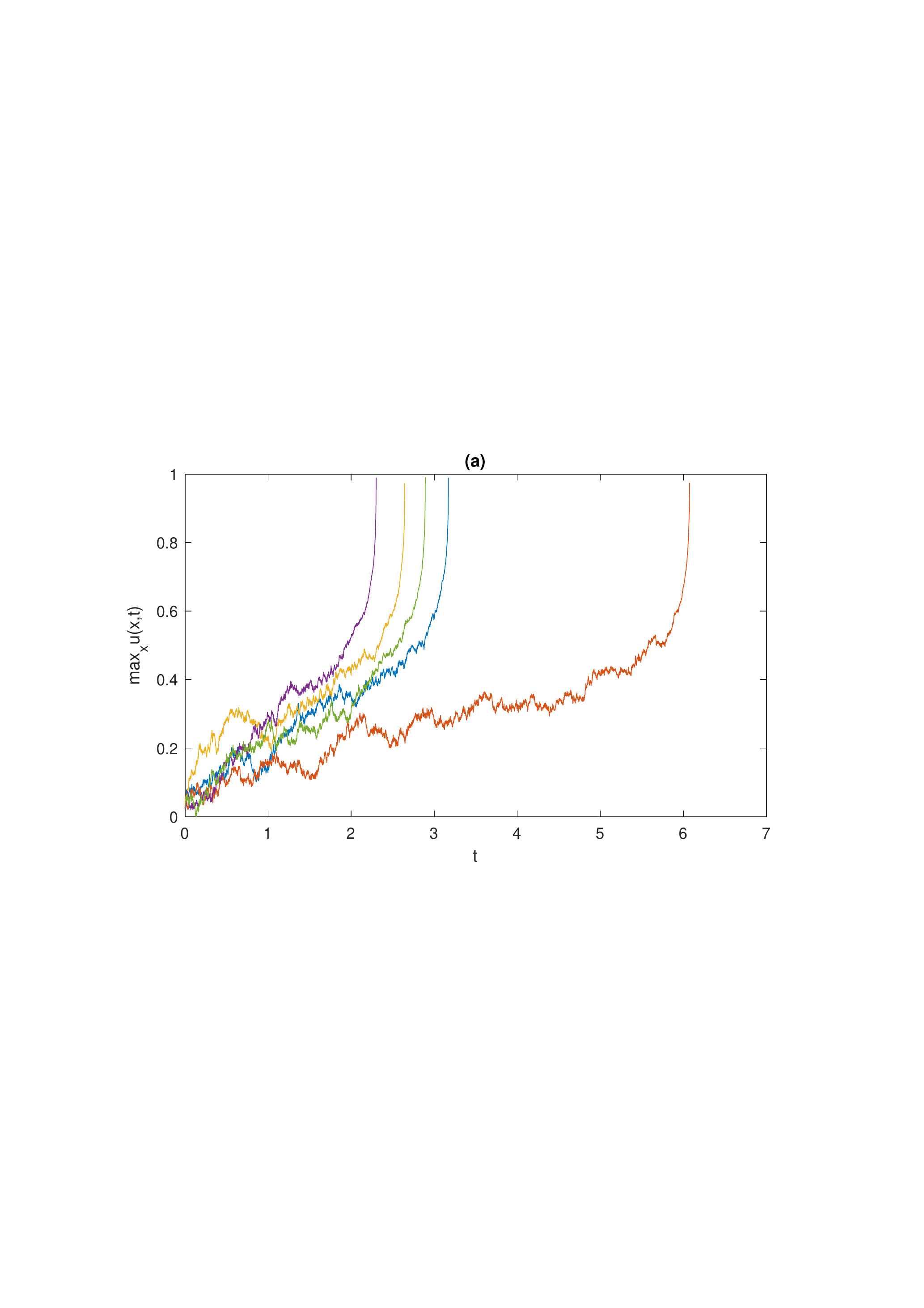}
   \end{minipage}\hfill
   \begin{minipage}{0.48\textwidth}\hspace{-2cm}
     \centering
     \includegraphics[width=1.2\linewidth]{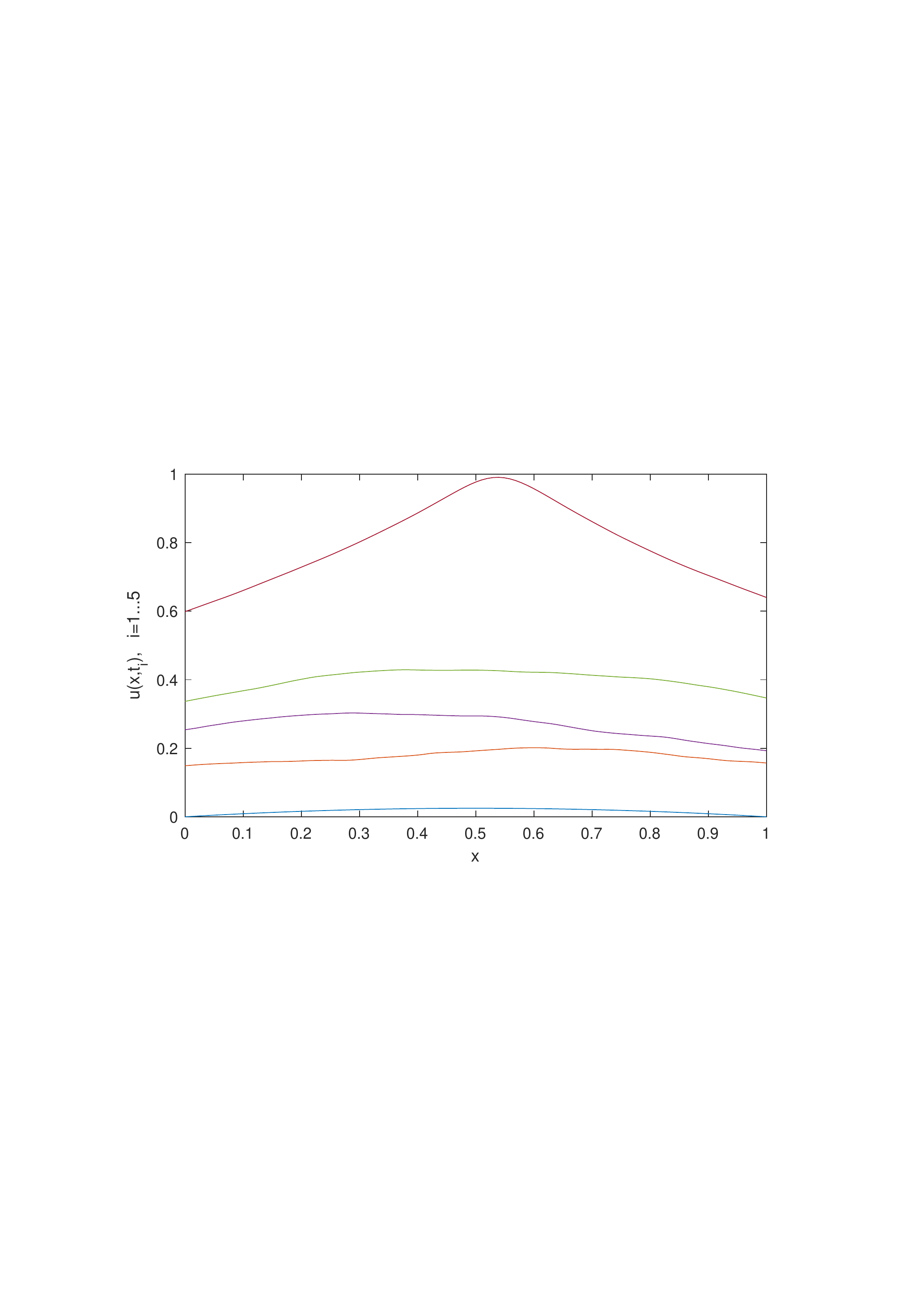}
   \end{minipage}\vspace{-4cm}
   \caption{(a) Realisation of the $\|u(\cdot,t) \|_\infty$ of the numerical solution of problem \eqref{LSP} for $\lambda=2$, $\kappa=1$, $M=102$, $N=10e4$,  $r=0.1$  and initial condition $u(x,0)=c\,x(1-x)$ for $c=0.1$.
   (b) Plot of $u(x,t_i) $ from a different realization with the same values of the parameters at five time instants.}\label{Fig_sim2R}
\end{figure}

Additionally in the following Table ($T_2$) we present the results of such a numerical experiment. Indeed, implementing  $N_R$ realizations we derive analogous results as  in Table ($T_1$).

\begin{center}\label{Table2}
{\bf  Table (T2)}\\Realizations of the numerical solution of problem \eqref{LSP} in the case of nonhomogeneous Robin boundary conditions  for $N_R=1000$ in the time interval $[0,1].$ \bigskip
 \begin{tabular}{||c||c|c|c||}
 \hline\label{T2}
    $\lambda $      &        Quenching times      & $m(T_q)$  &  $\sigma^2 (T_q) $\\
  \hline 
   0.2  & 0 & - & -  \\
 0.4 &  0 &       -  &  -  \\
0.6 & 0 & - & - \\
0.8  & 1000 & 0,75945 &	0.0014	\\
1 &   1000 &      0.5547	&   5.41553e-04\\ 
 \hline
\end{tabular}
%
\end{center}\vspace{-2cm}
We notice a transition of the behaviour of the solution $u$  around the value $\lambda\sim 0.7$. So, in the next table,  Table ($T_3$),  we focus  around this value and point out  a gradual increase of the number of quenching  results as the parameter $\lambda$ increases.

 \begin{center}\label{Table3}
{\bf  Table (T3)}\\Realizations of the numerical solution of problem \eqref{LSP} in the case of nonhomogeneous Robin boundary conditions \\ for $N_R=1000$ in the time interval $[0,1].$  \\
  \begin{tabular}{||c||c|c|c|| }\hline\label{T3}
    $\lambda $      &        Quenching times      & $m(T_q)$  &  $Var (T_q) $\\
  \hline
  0.6 & 0& -& -\\
  0.65  & 85 &      0.0841	&    0.0762\\
 0.675 &    594 &      0.5777	&    0.2284   \\
0.7 &      877&   0.8227	&    0.0958 	  \\
0.75  &   1000  &     0.8332	  &  0.0016	\\
 \hline
\end{tabular}
\end{center}

In the next set of experiments we solve numerically  problem \eqref{GSPzu}. We choose the diffusion coefficient to be of the form $g=g(t)=c_0+c_1\cos(\omega t)$, with $c_0=1$, $c_1=0.1$, $\omega=10$. We also consider a potential in the source term of the form  $h(x)=x^b$, for $b=\frac12$. The results of these experiments are demonstrated in Table ($T_4$). 
 \begin{center}\label{Table4}
{\bf  Table (T4)}\\Realizations of the numerical solution of problem \eqref{GSPzu} in the case of nonhomogeneous Robin boundary conditions  for $N_R=1000$ in the time interval $[0,1].$  
\\
  \begin{tabular}{||c||c|c|c||}\hline\label{T4}
    $\lambda $       &        Quenching times      & $m(T_q)$  &  $Var (T_q) $\\
  \hline
  0.6 & 0 & -& -\\
  0.8  &  0 & - & -  \\
 1 &    776 &   0.7348  &   0.1568   \\
1.2 & 1000 &  	 0.7432 	 &   0.0011  \\
1.4  &   1000  &   0.6029	 &   5.2540e-04	\\
 \hline
\end{tabular}
\end{center} 
Moreover focusing again around the value $\lambda\sim 1$ we can observe the transitional behaviour of the system in Table ($T_5$)  for $T=1.$
\begin{center}\label{Table5}
 {\bf  Table (T5)}\\Realizations of the numerical solution of problem \eqref{GSPzu} in the case of nonhomogeneous Robin boundary conditions  for $N_R=1000$ in the time interval $[0,1].$
 \\
  \begin{tabular}{||c||c|c|c|c||}\hline\label{T5}
    $\lambda $     &         Quenching times      & $m(T_q)$  &  $Var (T_q) $\\
  \hline
  0.9 & 0& 0& 0\\
  0.95  & 35  &     0.0347     &     0.0333\\
 0.97 &  191     &      0.1883          &       0.1505\\
0.99 &  431        &     0.4198         &    0.2331       \\
0.995  &  502       &     0.4870         &  0.2358      \\
1.1 &   993 &    0.8442 &  0.0068  \\
 \hline
\end{tabular}
\end{center}


\section{Discussion}\label{di}

In the current  work  we delmonstrate  an  investigation of  a $d-$dimensional,  $d=1,2,3,$  stochastic parabolic problem related to the modelling of an electrostatic
MEMS device part of which is a membrane-rigid plate system. Firstly,  the basic stochastic model  together is  presented. Later,  local existence and uniqueness of the basic  stochastic $u-$problem  \eqref{LSP}, as well as  of its main variations, and for general boundary conditions is established via Banach's Fixed point theorem. 

Next, and for a certain form of boundary conditions (cf. equation \eqref{LSP2za}) it is shown that the solution of $z-$problem   \eqref{LSPza} quenches   almost surely regardless the chosen initial condition or the value of the tuning parameter 
$\lambda.$ This is actually  a striking and counterintutive result; indeed  in almost every case quenching for the corresponding deterministic problem occurs only  if the parameter $\lambda$ or the initial data are  large enough. To the best of our knowledge, this the first result of such kind  is derived in the context of semilinear SPDEs related to MEMS.

Besides, adding  a regularizing term into equation \eqref{LSP1za}, in the form of a modified nonlinear drift term, changes the dynamics of solution  $z=1-u$ and we then obtain a dynamical  behaviour resembles that of the deterministic problem.  Moreover, in this particular case  a lower estimate of the quenching probability is provided by formula \eqref{LSPgProp}.

The case  of including time dependent coefficients related to dispersion and varying dielectric properties in the equation  is tackled by  similar analysis method. Again   a lower bound for the  quenching probability  or quenching almost surely are derived, depending on  the size of the first eigenvalue of corresponding eigenvalue problem.

We end our investigation by the implementation of an finite element numerical method, 
for the solution of the stochastic  time-dependent problem in the one-dimensional case. We also  provide a series of numerical experiments
initially  for the case of homogeneous  Dirichlet boundary conditions (for the $u$-problem)  and next for nonhomogeneous Robin conditions. In each case we present various results estimating the  quenching times  in a specific time interval $[0,T],$  which are of  particular interest for MEMS practitioners.



\end{document}